\DeclareSymbolFont{bbold}{U}{bbold}{m}{n}
\DeclareSymbolFontAlphabet{\mathbbold}{bbold}
\DeclareMathOperator{\mesh}{mesh}
\DeclareMathOperator{\diam}{diam}
\DeclareMathOperator{\confdim}{Cdim}
\DeclareMathOperator{\hdim}{Hdim}
\DeclareMathOperator{\ARConfdim}{ARConfdim}
\DeclareMathOperator{\interior}{int}
\newcommand{\Julia}{\mathcal{J}}
\newcommand{\hCbb}{\hat{\mathbb{C}}}
\newcommand{\vep}{\varepsilon}
\newcommand{\bbN}{\mathbb{N}}
\newcommand{\N}{\mathbb{N}}
\newcommand{\R}{\mathbb{R}}
\newcommand{\Y}{\mathbb{Y}}
\newcommand {\UUU}{\mathcal{U}}
\newcommand{\wtU}{\widetilde{U}}
\newcommand{\roundness}{\mathrm{round}}
\newcommand{\angela}[1]{}
\newtheorem{thm}{Theorem}[section]
\newtheorem{cor}[thm]{Corollary}
\newtheorem{lem}[thm]{Lemma}
\newtheorem{prop}[thm]{Proposition}
\newtheorem{theorem}{Theorem}
\newtheorem{corollary}[theorem]{Corollary}
\theoremstyle{definition}
\newtheorem{defn}{Definition}[section]
\theoremstyle{remark}
\begin{document}
    \author{Insung Park}
   \address{The Institute for Computational and Experimental Research in Mathematics, Providence, RI, 02903 USA}
    \email{insung\_park@brown.edu}
    
    \author{Angela Wu}
   \address{Department of Mathematics, Indiana University, Bloomington, IN 47405 USA}
    \email{angelawu0312@gmail.com}
   \title[Quasi-self-similar fractals containing ``Y'' ]{Quasi-self-similar fractals containing ``Y'' have dimension larger than one}


   \begin{abstract}
   Suppose $X$ is a compact connected metric space and $f: X \to X$ is a metric coarse expanding conformal map in the sense of Ha\"issinsky-Pilgrim. We show that if $X$ contains a homeomorphic copy of the letter ``Y'', then the Hausdorff dimension of $X$ is greater than one. As an application, we show that for a semi-hyperbolic rational map $f$ its Julia set $\Julia_f$ is quasi-symmetric equivalent to a space having Hausdorff dimension 1 if and only if $\Julia_f$ is homeomorphic to a circle or a closed interval. 
    \end{abstract}






   \maketitle
\section{Introduction}

In the study of dynamical systems for rational maps on the Riemann sphere, there are fractals called Julia sets on which the dynamics is more chaotic than on the complements, called Fatou sets. If a Julia set is connected, then its Hausdorff dimension is often greater than one, see \cite{Zdunik_DimMaxMsr}, \cite{Przytycki_DimBdry}, \cite{PZ_DimPoly}. In this vein, we also discuss a family of Julia sets whose Hausdorff dimensions are greater than one. In fact, we obtain a stronger assertion in the following sense: Any quasi-symmetric deformations of those Julia sets still have Hausdorff dimension greater than one.
\medskip

Let $X$ denote a compact connected metric space and $f: X \to X$ a continuous map, so that the iterations of $f$ define a topological dynamical system on $X$. For $e_1=(1,0,0),e_2=(0,1,0),$ and $e_3=(0,0,1)$, let
\[ \Y=[0,e_1]\cup [0,e_2]\cup[0,e_3] \subset \mathbb{R}^3 \]
denote the metric space which is the union of three unit Euclidean segments meeting at the point $o=(0,0,0)$, equipped with its length metric. If $X$ contains a homeomorphic copy of $\Y$, we write $\Y \hookrightarrow X$.

The following is our main theorem. We denote by $\hdim(X)$ the Hausdorff dimension of $X$.

\begin{theorem}
\label{theorem:main}
Let $X$ be a compact connected locally connected metrizable space. Suppose $f: X \to X$ is a topological coarse expanding conformal dynamical system. If $\Y \hookrightarrow X$, then $\hdim(X,d)>1$ for any metric $d$ in the canonical gauge $\mathcal{G}(f)$.
\end{theorem}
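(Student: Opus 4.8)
The plan is to exploit the dynamics to produce, at every small scale around the branch point of the embedded "Y", a self-similar-like "thickening" that forces extra dimension. The central intuition is that a branch point cannot be approximated by a metric space of dimension exactly one without the three prongs being forced to merge, but the coarse expanding conformal structure guarantees that the local geometry near the branch point is reproduced (up to bounded distortion) at all scales. This quasi-self-similarity is the engine: a one-dimensional object like an interval or circle is, at small scales, essentially an arc, but a "Y" branch point is not, and the dynamics prevents the branching from being a low-measure anomaly.

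\medskip

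\noindent\textbf{Step 1: Extract the quasi-self-similar structure.} First I would recall that for a topological coarse expanding conformal system, the canonical gauge $\mathcal{G}(f)$ carries a visual metric for which there is a natural system of "tiles" coming from preimages of a finite cover, satisfying the round-tree/round-annulus axioms of Ha\"issinsky--Pilgrim, with uniformly bounded overlap multiplicity and a uniform expansion factor. The key quantitative consequence I want is \emph{metric quasi-self-similarity}: there is a constant $L\ge 1$ so that for every point $x$ and every small radius $r$, the ball $B(x,r)$ is $L$-bi-Lipschitz (or at least $L$-quasi-symmetrically) equivalent, after rescaling by $1/r$, to a ball of definite radius in $X$. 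Because $\Y\hookrightarrow X$, pick an embedded copy $\iota(\Y)$ with branch point $p=\iota(o)$; pulling back by iterates of $f$ (or by the inverse branches furnished by expansion) transports a neighborhood of $p$ to neighborhoods at arbitrarily small scales, so the "Y" pattern recurs near $p$ at a sequence of scales $r_n\to 0$ with uniformly controlled distortion.

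\medskip

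\noindent\textbf{Step 2: A lower bound for dimension from persistent branching.} With the quasi-self-similar tiles in hand, I would build a lower bound for $\hdim$ via a mass-distribution / Frostman argument, or equivalently by a combinatorial counting of tiles at each level. The heuristic is that the number $N(r)$ of tiles of size $\approx r$ needed to cover a neighborhood of the branch point grows strictly faster than $r^{-1}$: near a genuine "Y" the three prongs each contribute covering mass, and the branching, reproduced at every scale, forces a multiplicative excess over the $r^{-1}$ growth characteristic of a rectifiable arc. Quantitatively, I expect to show that $\liminf_{r\to 0}\frac{\log N(r)}{\log(1/r)} > 1$, which bounds the (Assouad or box, hence Hausdorff after a uniformity argument) dimension below by a constant strictly exceeding $1$. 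The cleanest route is probably to exhibit a self-similar Cantor-like subset threaded through the branch structure: at each level the recurring "Y" lets one choose among at least three sub-tiles meeting near the branch point in a way that cannot collapse into a single arc, yielding a sub-tree whose dimension is bounded below by $\log 3/\log(\text{contraction})$ corrected for the tree geometry, which exceeds $1$.

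\medskip

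\noindent\textbf{Step 3: The main obstacle.} The hard part will be converting the \emph{topological} datum $\Y\hookrightarrow X$ into a \emph{quantitative} geometric separation that survives at all scales. Topologically, a branch point only says three arcs meet at $p$; it says nothing a priori about how the three prongs are separated metrically, and under a bad visual metric the prongs could become tangent or exponentially pinched, so that the extra covering mass I want in Step 2 evaporates. The resolution I anticipate is to use \emph{connectedness combined with expansion}: the local connectedness of $X$ and the round-tree axiom give a uniform lower bound on the "thickness" of the complementary components separating the prongs near $p$, and expansion guarantees this lower bound does not degenerate as $r_n\to 0$. Formalizing this — showing that a persistent topological triple point forces a uniform quantitative "fatness" defect from dimension one — is where the real work lies, and I would expect it to hinge on a quasi-symmetric invariance argument ruling out the possibility that $X$ near $p$ looks, at all scales simultaneously, like a bi-Lipschitz arc.
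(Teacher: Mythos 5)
There is a genuine gap, and it sits exactly where you placed the ``real work'': your Step 2 rests on a heuristic that is false as stated. Three arcs meeting at a point can be covered by $O(r^{-1})$ balls of radius $r$ --- the letter $\Y$ itself has Hausdorff, box, and Assouad dimension exactly $1$ --- and even a nested union of copies of $\Y$ at scales $2^{-n}$ attached at a single point is a countable union of dimension-$1$ sets, hence has dimension $1$. So ``branching reproduced at every scale near $p$'' does not force $N(r)$ to grow faster than $r^{-1}$, and the proposed Frostman subtree with lower bound $\log 3/\log(\text{contraction})$ has no reason to exceed $1$ (if the contraction per level is $1/3$, you get exactly $1$). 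What actually forces $\hdim>1$ is a much stronger hypothesis: the $\Y$'s must appear in \emph{every} ball, at every location and every scale, with the three tips separated from the opposite pair of arms by a definite fraction $c$ of the ball's diameter. That is Azzam's theorem on $c$-antenna-like spaces (\cite{Azzam_WigglySpace}, quoted as Theorem \ref{thm:dimant} in the paper), whose proof is a traveling-salesman / Jones-$\beta$-number type argument exploiting connectedness --- a connected set of $\sigma$-finite $\mathcal{H}^1$ measure is rectifiable and so has approximate tangents almost everywhere, which uniform branching at all scales and locations forbids. This is not recoverable from a tile count, and your Step 3, which correctly identifies the need for quantitative separation, proposes to get it from ``connectedness combined with expansion'' without a mechanism.

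The second, smaller omission is propagation in space rather than just in scale. You pull the $\Y$ back to small scales \emph{near $p$} via inverse branches, but the antenna-like condition (and any correct argument) needs a quantitatively nondegenerate $\Y$ inside every ball $B(x,r)$, for every $x\in X$. The paper gets this from (Irreducibility): for each $V\in\UUU_0$ some iterate $f^{K}$ maps a small ball in $V$ onto all of $X$, so a component of $f^{-K}(U_0)$ sits inside $V$ and the copy of $\Y$ lifts into it by the path-lifting property of finite branched coverings (Corollary \ref{cor:ylifting}); the homothety property of the visual metric (Proposition \ref{prop:visual}-(4)) then gives the uniform separation constant $c'=c/C^2$ for every element of $\mathbf{U}$, and the ``nearly balls'' properties convert this into the statement for arbitrary balls. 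Your sketch needs both of these ingredients --- irreducibility to spread the $\Y$ everywhere, and Azzam's theorem (or a proof of something equivalent) to convert uniform antennas into a dimension bound --- before Step 2 can be made to work.
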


From Theorem \ref{theorem:main} we have the following corollary.

\begin{corollary}
\label{cor:main}
Let $X$ and $f$ be as in Theorem \ref{theorem:main}. If $\Y \hookrightarrow X$ and $\confdim((X,\mathcal{G}(f)))=1$ then the conformal dimension is not attained by any metric in $\mathcal{G}(f)$.
\end{corollary}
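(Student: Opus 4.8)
The plan is to derive the corollary as an immediate logical consequence of Theorem~\ref{theorem:main}, so essentially no new work beyond the main theorem is needed. First I would unwind the meaning of the conclusion: to say that the conformal dimension $\confdim((X,\mathcal{G}(f)))$ is \emph{attained} is to assert the existence of a metric $d_\ast \in \mathcal{G}(f)$ with $\hdim(X,d_\ast) = \confdim((X,\mathcal{G}(f)))$; equivalently, the infimum defining the conformal dimension is realized as a minimum by some member of the gauge.

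Next I would argue by contradiction. Suppose $\confdim((X,\mathcal{G}(f))) = 1$ and that this value is attained by some $d_\ast \in \mathcal{G}(f)$, so that $\hdim(X,d_\ast) = 1$. Since $X$ and $f$ satisfy the hypotheses of Theorem~\ref{theorem:main} and $\Y \hookrightarrow X$ by assumption, applying that theorem to the particular metric $d_\ast$ yields $\hdim(X,d_\ast) > 1$. This contradicts $\hdim(X,d_\ast) = 1$, so no such $d_\ast$ exists and the conformal dimension is not attained by any metric in $\mathcal{G}(f)$.

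The only point worth keeping in mind—and the reason the statement is not vacuous—is that an infimum equal to $1$ is entirely compatible with every individual member of the gauge giving strictly more than $1$. Theorem~\ref{theorem:main} forces $\hdim(X,d) > 1$ uniformly for each $d \in \mathcal{G}(f)$, yet this says nothing about whether the infimum over the whole gauge can be pushed down to $1$; when it can, as in the hypothesis $\confdim = 1$, it is precisely this uniform strict inequality that blocks the infimal value from being achieved. I do not anticipate any genuine obstacle in the corollary itself: all of the substantive difficulty is concentrated in the proof of Theorem~\ref{theorem:main}, and the corollary is a short deduction from it.
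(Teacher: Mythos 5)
Your proof is correct and is exactly the deduction the paper intends: the paper gives no separate argument, simply noting the corollary follows from Theorem~\ref{theorem:main}, since any attaining metric $d_\ast\in\mathcal{G}(f)$ would have $\hdim(X,d_\ast)=1$, contradicting the theorem's conclusion that $\hdim(X,d)>1$ for every $d$ in the gauge. Your closing remark correctly identifies why the statement is non-vacuous.
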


Let us define the terminologies sued in the statement of Theorem \ref{theorem:main}.

\subsection*{Quasi-symmetries and conformal dimensions}
Let $(X,d_X)$ and $(Z,d_Z)$ be compact metric spaces. A homeomorphism $h: X \to Z$ is a {\em quasi-symmetry} if there is a homeomorphism $\eta: [0,\infty) \to [0,\infty)$ such that for every $x,y,z \in X$ with $x \neq z$, we have 
\[ \frac{d_Z(h(x), h(y))}{d_Z(h(x), h(z))} \leq \eta\left( \frac{d_X(x,y)}{d_X(x,z)} \right). \]
One can think of quasi-symmetries as homeomorphisms that do not distort shapes much.

We say that two metric spaces $X$ and $Z$ are {\em quasi-symmetrically equivalent}, or simply {\em quasi-symmetric}, and write $X \sim_{\mathrm{qs}} Z$, if there is a quasi-symmetric homeomorphism between $X$ and $Z$. Quasi-symmetries define an equivalence relation on the collection of metric spaces. 

We often consider different metrics on the same topological space $M$. In this case, we say that two metrics $d_1$ and $d_2$ on $M$ are {\em quasi-symmetric} if the identity map $\mathrm{id}:(M,d_1)\to(M,d_2)$ is a quasi-symmetry.

For a compact metric space $X$, the \emph{conformal dimension} of $X$, denoted by $\confdim(X)$, is defined as the infimal Hausdorff dimension in the quasi-symmetric equivalence class containing $X$, i.e.,
\[
    \confdim(X) = \inf\{ \hdim(Z) : X \sim_{\mathrm{qs}} Z\}.
\]
We say that $X$ {\em attains} its conformal dimension if the infimum in the definition of the conformal dimension of $X$ is realized as the minimum.

\subsection*{Coarse expanding conformal dynamical systems}
We abbreviate {\em coarse expanding conformal} as ``cxc''. The idea of cxc maps were introduced by P.\@ Ha\"{i}ssinsky and K.\@ Pilgrim in \cite{kmp:ph:cxci}. We give precise definitions in Section \ref{sec:Background}.

We say that a topological dynamical system $f:X\to X$ on a topological space $X$ is {\em topological cxc} if it satisfies 3 properties (Expansion), (Degree), and (Irreducibility): (Expansion) and (Irreducibility) are standard conditions for topological dynamical systems. The property (Degree) prohibits periodic and more generally recurrent periodic branch points, which characterizes semi-hyperbolic rational maps in complex dynamics. We remark that we do not use a metric to define topological cxc maps.

Now let us assume that $X$ is equipped with a metric $d_X$. We say that $f:X \to X$ is {\em metric cxc} if (1) $f$ is topological cxc, and (2) $f$ satisfies metric conditions defined in such a way that the {\em Sullivan's Principle of the Conformal Elevator} holds. It guarantees that the metric space $(X,d_X)$ is {\em quasi-self-similar} in a sense that any arbitrarily small piece of $X$ looks similar to a large piece of $X$ up to some bounded error. More precisely, a metric space $X$ is called quasi-self-similar if there is a $k \geq 1$ and $r_0 > 0$ such that for any ball $B \subset X$ with radius $r < r_0$ there exists a map $E_B:B \to X$ satisfying
\[
\frac{1}{k} \cdot \frac{r_0}{r} \cdot d_X(x,y) \leq d_X(E_B(x), E_B(y)) \leq k \cdot \frac{r_0}{r} \cdot d_X(x,y)
\]
for every $x,y \in B$ \cite[p.\@ 42]{Sullivan_note}. 

Although the definition of cxc maps takes motivation in complex dynamics, cxc maps also include self-maps of manifolds whose iterates are uniformly quasi-regular. Being topological or metric cxc is preserved by quasi-symmetric conjugacies and taking products. See \cite{HP_CxcExample} for more examples of cxc maps.

\subsection*{Canonical gauge and visual metrics of topological cxc maps}
Let $f:X\to X$ be a topological cxc map. If $f$ is metric cxc for two different metrics $d$ and $d'$ on $X$, then $(X,d)$ and $(X,d')$ are quasi-symmetric. See Proposition \ref{prop:CXCConj}. Hence, there exists a unique quasi-symmetric class $\mathcal{G}(f)$, called the {\em canonical gauge of $f$}, consisting of metrics for which $f$ is metric cxc. In this sense, the conformal dimension for $\mathcal{G}(f)$, i.e., the infimal Hausdorff dimension of metrics in $\mathcal{G}(f)$, is a natural invariant for topological cxc maps.

For a topological cxc map $f:X\to X$, we can construct metrics, called {\em visual metrics}, which are contained in $\mathcal{G}(f)$. One advantage of having visual metrics is the following: Equipping $X$ with a visual metric, we may assume that the $n$-th preimages of a ball are quasi-balls with uniformly bounded distortion whose radii decay exponentially fast about $n>0$.

\subsection*{Obstruction for hdim $\le$ 1} J.\@ Azzam proved that if a compact connected metric space $X$ is {\em antenna-like}, i.e., having ``sufficiently many'' copies of $\Y$ with uniformly bounded distortion, then $\hdim(X)>1$ \cite{Azzam_WigglySpace}. See Section \ref{sec:Antenna} for precise definitions.

Since being antenna-like is preserved by quasi-symmetries, to show Theorem \ref{theorem:main} it suffices to prove that $X$ with a visual metric is antenna-like.  Starting from one copy of $\Y$ in $X$, we first find many other copies of $\Y$ by using properties of topological cxc maps, and then we show these copies are not distorted much by using the properties of metric cxc maps. 

\subsection*{Families of rational maps}
In complex dynamics with one variable, we consider rational maps $f:\hCbb\to\hCbb$, which are holomorphic maps from the Riemann sphere $\hCbb$ to itself. In this article, we discuss three sub-families of rational maps. For a rational map $f$,
\begin{itemize}
    \item $f$ is {\em semi-hyperbolic} if $f$ does not have any parabolic periodic point nor any recurrent critical point;
    \item $f$ is {\em sub-hyperbolic} if any critical point is periodic, preperiodic, or attracted to an attracting periodic cycle; and
    \item $f$ is {\em post-critically finite} if any critical point is periodic or preperiodic.
\end{itemize}
Post-critically finiteness implies sub-hyperbolicity, and sub-hyperbolicity implies semi-hyperbolicity. Every Fatou component of a semi-hyperbolic rational map is in an attracting or a super-attracting basin \cite{Mane_ThmFatou}. Any sub-hyperbolic rational map can be obtained from a unique post-critically finite rational map by quasi-conformal deformation on the Fatou set, which deforms each super-attracting basin to an attracting basin \cite{McM_PCFCenter}.

\subsection*{Attainment of (Ahlfors-regular) conformal dimensions}

For applications of conformal dimensions to geometric group theory or complex dynamics, we use a slight variant of the conformal dimension, called the {\em Ahlfors-regular conformal dimension}. A metric space $X$ is {\em Ahlfors-regular} if round sets of $X$ well behave with the $\delta$-Hausdorff measure $\mathcal{H}^\delta$ where $\delta=\hdim(X)$, in a sense that there exists $C>1$ such that for any $0<r<\frac{1}{2}\cdot \diam(X)$ and $x\in X$ we have
\[
\frac{1}{C}\cdot {r^\delta} \le \mathcal{H}^\delta(B(x,r)) \le C \cdot {r^\delta}.
\]
For a compact metric space $X$, the Ahlfors-regular conformal dimension, denoted by $\ARConfdim(X)$, is defined by
\[
    \ARConfdim(X) = \inf\{ \hdim(Z) : X \sim_{\mathrm{qs}} Z, Z ~\text{is~Ahlfors-regular} \}
\]

M.\@ Bonk and B.\@ Kleiner showed that the attainment of conformal dimensions characterizes lattices of hyperbolic isometry groups.

\begin{thm}[Bonk-Kleiner \cite{BK:QMgroupaction}]\label{thm:BK_Rigidity}
Suppose that $G$ is a hyperbolic group and the boundary at infinity $\partial_\infty G$ has topological dimension $n$. If $\ARConfdim(\partial_\infty G)=n$ and attained, then $G$ is (up to finite index) the fundamental group of a closed hyperbolic $n$-manifold.
\end{thm}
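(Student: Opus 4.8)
The plan is to reduce the statement to a quasisymmetric uniformization problem and then to invoke Sullivan--Tukia rigidity. First I would record the standard fact that a hyperbolic group $G$ acts on its boundary $(\partial_\infty G, d)$ by uniformly quasi-M\"obius homeomorphisms, and that this action is cocompact on the space of distinct triples of $\partial_\infty G$. Here I take $d$ to be a metric realizing the Ahlfors-regular conformal dimension, so that $(\partial_\infty G,d)$ is Ahlfors $n$-regular with $\hdim(\partial_\infty G,d)=n$ equal to the topological dimension of $\partial_\infty G$. The whole theorem then follows once one shows that this attaining space is quasisymmetrically equivalent to the round sphere $S^n$: transporting the $G$-action across such a quasisymmetry produces a uniformly quasiconformal action of $G$ on $S^n$ that is cocompact on triples, and Sullivan--Tukia rigidity conjugates such an action to a conformal (M\"obius) one, exhibiting $G$ as a cocompact lattice in $\mathrm{Isom}(\mathbb{H}^{n+1})$, hence up to finite index the fundamental group of a closed hyperbolic manifold as claimed. (For $n=1$ the final conjugacy instead comes from the convergence-group theorem.)

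The heart of the argument is therefore the quasisymmetric uniformization $(\partial_\infty G,d)\sim_{\mathrm{qs}} S^n$, and the only input available is the numerical coincidence that the Hausdorff dimension equals the topological dimension $n$. The point is that $n$ is exactly the critical exponent for $n$-modulus on an Ahlfors $n$-regular space, so this coincidence forces the conformal geometry to be nondegenerate rather than collapsed. Concretely I would exploit the standard duality between the $1$-dimensional family of curves joining a pair of disjoint continua and the $(n-1)$-dimensional family of hypersurfaces separating them: the topological-dimension hypothesis makes these families link, and Ahlfors $n$-regularity controls the corresponding critical moduli, yielding a Loewner-type lower bound for the modulus of connecting curve families. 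Cocompactness of the $G$-action on triples is what promotes such an estimate to a scale-invariant, quantitative one, so that $(\partial_\infty G,d)$ becomes a Loewner space.

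With a Loewner condition and Ahlfors $n$-regularity in hand I would then identify the space with a quasisymmetric sphere: for $n=2$ this is the Bonk--Kleiner uniformization theorem for linearly locally connected Loewner $2$-spheres, while in general one appeals to the rigidity machinery that recognizes a self-similar Loewner space whose Hausdorff and topological dimensions coincide, and which carries a cocompact quasi-M\"obius action, as a round sphere. I expect \textbf{this uniformization step, together with the positivity of the critical modulus feeding it, to be the main obstacle}, since quasisymmetric uniformization of $S^n$ fails for general Ahlfors regular spheres; it is precisely the borderline equality $\hdim = n$ --- which forbids the kind of ``wiggling'' that, as in Azzam's antenna obstruction used elsewhere in this paper, would strictly inflate the Hausdorff dimension --- that rescues it. Once the sphere is identified, the rigidity conclusion is the direct application of quasiconformal rigidity of lattice actions sketched in the first paragraph.
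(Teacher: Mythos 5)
This statement is not proved in the paper at all: it is quoted as background from Bonk--Kleiner's article on rigidity for quasi-M\"obius group actions, so there is no internal proof to compare against. Judged on its own, your outer reduction is the correct skeleton and is indeed how the result is deduced: attainment of $\ARConfdim(\partial_\infty G)=n$ supplies an Ahlfors $n$-regular metric in the gauge on a space of topological dimension $n$; $G$ acts on it by uniformly quasi-M\"obius homeomorphisms, cocompactly on triples; and once the space is identified quasisymmetrically with the round $S^n$, Sullivan--Tukia rigidity (the convergence-group theorem for $n=1$) conjugates the action to a M\"obius one and realizes $G$ as a cocompact lattice in $\mathrm{Isom}(\mathbb{H}^{n+1})$.

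The gap is exactly where you flag it, and it is not a minor one. Your proposed mechanism for the uniformization --- a Loewner-type modulus lower bound extracted from ``linking'' connecting curve families with separating hypersurface families, using only Ahlfors $n$-regularity and the coincidence of Hausdorff and topological dimension --- is not an argument: nothing in the sketch shows these hypotheses force any curve family to have positive $n$-modulus (an Ahlfors $n$-regular space of topological dimension $n$ can a priori contain no rectifiable curves at all), and the genuine theorem in this direction (Keith--Laakso: attainment of the Ahlfors-regular conformal dimension forces positive modulus on some weak tangent) is itself deep and postdates the result being proved. Moreover, even granting a Loewner condition, quasisymmetric uniformization of Ahlfors $n$-regular Loewner $n$-spheres fails for $n\ge 3$ by examples of Semmes, so the ``rigidity machinery that recognizes \dots\ a round sphere'' you invoke is essentially the theorem itself rather than an available tool. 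Bonk--Kleiner's actual route is different: using consequences of Semmes's work on regular mappings, they show that a compact Ahlfors $n$-regular space of topological dimension $n$ has a weak tangent containing a quasisymmetrically embedded piece of $\mathbb{R}^n$ of positive measure, and then use the cocompact action on triples to propagate this local Euclidean structure to all locations and scales, identifying the space topologically and quasisymmetrically with $S^n$. Your write-up correctly isolates the hard step but does not supply it.
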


A similar result for self-maps of the 2-sphere was established by P.\@ Ha\"{i}ssinsky and K.\@ Pilgrim.

\begin{thm}[Ha\"{i}ssinsky-Pilgrim \cite{HP_MinConfDimCXC}] Suppose $f:X\to X$ is a metric cxc map for a metric space $X$ homeomorphic to the 2-sphere. If $\ARConfdim(X)=\hdim(X)$, then $f$ is topologically conjugate to (1) a semi-hyperbolic rational map if $\ARConfdim(X)=2$ or (2) a Latt\'{e}'s map (with additional properties) if $\ARConfdim(X)>2$.
\end{thm}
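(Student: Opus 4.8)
The plan is to read the hypothesis $\ARConfdim(X)=\hdim(X)$ as an \emph{attainment} statement: the ambient metric (more precisely, some metric in $\mathcal{G}(f)$) realizes the Ahlfors-regular conformal dimension, so among all Ahlfors-regular metrics quasi-symmetric to $X$ none has strictly smaller Hausdorff dimension. The first step is to extract the geometric consequences of this minimality. By the theory of conformal-dimension attainment (Keith--Laakso, and in the Ahlfors-regular setting Carrasco Piaggio and Bonk--Kleiner), a metric that attains the Ahlfors-regular conformal dimension cannot be improved by redistributing mass, and this forces an abundance of curves at the critical exponent $Q:=\hdim(X)$: some nondegenerate curve family has positive $Q$-modulus, and a suitable weak tangent of $(X,d)$ is $Q$-Loewner. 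Equivalently, $(X,d)$ is Ahlfors $Q$-regular and, at least infinitesimally, of Loewner type. I would isolate this as the analytic heart of the argument.

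The second step is to \emph{globalize} the infinitesimal Loewner structure using the dynamics. Because $f$ is metric cxc, the Principle of the Conformal Elevator and quasi-self-similarity let me transport geometric features between any small ball and a definite-size piece of $X$ with uniformly bounded distortion. Thermodynamic formalism for cxc maps produces an $f$-invariant geometric measure $\mu$ comparable to $\mathcal{H}^Q$, with respect to which $f$ is ergodic; combining ergodicity with the bounded-distortion dynamics spreads the positive-modulus (Loewner) property from the single weak tangent to every location and every scale. The outcome I am aiming for is that $(X,d)$ is a globally $Q$-Loewner, Ahlfors $Q$-regular topological $2$-sphere; here the (Expansion) axiom and the linear local connectivity that cxc dynamics forces supply the topological regularity needed for the uniformization input below.

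The third step is the uniformization dichotomy, split by the value of $Q$. If $Q=2$, then $(X,d)$ is an Ahlfors $2$-regular, linearly locally connected, $2$-Loewner topological sphere, so by Bonk--Kleiner's quasi-symmetric uniformization theorem it is quasi-symmetric to the round sphere $\rs$. Transporting $f$ through this quasi-symmetry yields a cxc self-map $\widehat{f}$ of $\rs$ whose canonical gauge is the standard conformal gauge; a cxc map compatible with the standard conformal structure is essentially uniformly quasiregular, so by the measurable Riemann mapping theorem (Sullivan straightening) $\widehat{f}$ is quasi-conformally, hence topologically, conjugate to a rational map, and the (Degree) axiom, which forbids recurrent periodic branch points, forces this rational map to be semi-hyperbolic. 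If instead $Q>2$, the sphere cannot be quasi-symmetric to $\rs$, and the $Q$-Loewner weak tangent is a self-similar homogeneous space of Hausdorff dimension $Q$ but topological dimension $2$. Running a Cheeger-type differentiation and rigidity argument on this tangent forces the return dynamics to be affine with respect to an invariant flat orbifold structure, which is exactly the situation of a Lattès map carrying a self-affine (snowflaked) metric responsible for $Q>2$; pulling the conjugacy back to $X$ gives the second alternative.

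The step I expect to be the main obstacle is the globalization in the second paragraph together with the tangent identification in the $Q>2$ case. Turning a single $Q$-Loewner weak tangent into a genuine, uniform, large-scale modulus lower bound across all of $X$ requires delicate scale-uniform modulus estimates that rely only on the coarse (not smooth) conformality of $f$; and in the higher-dimensional regime one must correctly identify the tangent as a homogeneous group and invoke differentiation rigidity to pin the dynamics down to an affine model. These are precisely the points where the deep external inputs --- attainment rigidity, quasi-symmetric uniformization, and Cheeger differentiation --- must be assembled, and verifying their hypotheses within the purely cxc setting is the crux.
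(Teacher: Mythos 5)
The paper itself gives no proof of this statement: it is quoted as background from Ha\"{i}ssinsky--Pilgrim \cite{HP_MinConfDimCXC}, so your proposal can only be judged against their actual argument. Your $Q=2$ branch is essentially the right architecture and matches the known proof in outline: attainment of the Ahlfors-regular conformal dimension yields, via Keith--Laakso-type results, a curve family of positive $Q$-modulus on a weak tangent; the conformal elevator and quasi-self-similarity transport this to definite scales everywhere; and then Bonk--Kleiner uniformization applies --- in fact at $Q=2$ their theorem that an Ahlfors $2$-regular, linearly locally connected metric $2$-sphere is quasisymmetric to $\rs$ already suffices, so you do not even need the Loewner property there. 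Straightening the resulting uniformly quasiregular map to a rational map and using the (Degree) axiom to rule out recurrent critical behavior then gives semi-hyperbolicity, as you say.

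The genuine gap is your $Q>2$ branch. You propose to upgrade the positive-modulus weak tangent to a ``globally $Q$-Loewner, Ahlfors $Q$-regular topological $2$-sphere.'' That step cannot succeed: by Bonk--Kleiner, an Ahlfors $Q$-regular, $Q$-Loewner metric $2$-sphere forces $Q=2$ (and quasisymmetry to the standard sphere), so if your globalization worked, alternative (2) of the theorem would be vacuous --- yet Latt\`{e}s-type maps with self-affine metrics realize it. The crux of the actual argument is precisely that for $Q>2$ the Loewner condition \emph{fails}: the positive-modulus curve family is directional, carried by curves roughly parallel to a single expanding direction; dynamical invariance of this fat curve family produces an invariant foliation-like structure, which is then identified with the strong-unstable direction of an affine expanding torus endomorphism whose differential has two distinct real eigenvalue moduli, descending through the $(2,2,2,2)$ orbifold to a Latt\`{e}s-type map, with the excess dimension $Q-2$ coming from the self-affine (snowflaked in one direction) metric. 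Your proposed Cheeger-differentiation route misfires for the same reason: the relevant tangents are self-affine product-like spaces, not Carnot groups (a Carnot group of topological dimension $2$ is $\R^2$, which would again force $Q=2$), and no PI/Loewner hypothesis is available to run that machinery. A smaller caveat: the hypothesis $\ARConfdim(X)=\hdim(X)$ with $X$ not assumed Ahlfors regular is not literally an attainment statement; the theorem concerns realization of the infimum by an Ahlfors-regular metric in $\mathcal{G}(f)$, so your reading in the first paragraph should be justified rather than assumed.
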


A semi-hyperbolic rational map $f$ acts on its Julia set $\Julia_f$ as a metric cxc system. If the semi-hyperbolic rational map is a polynomial and the Julia set $\Julia_f$ is connected, then the conformal dimension of $\Julia_f$ equals 1, see \cite{Carrasco_ConfdimCombiMod}, \cite{Kinneberg:John}. 



\begin{theorem}
\label{theorem:SullivanDict}
Let $f$ be a semi-hyperbolic rational map of degree $d$ with a connected Julia set $\Julia_f$. Suppose $\confdim(\Julia_f)=1$. Then the conformal dimension is attained if and only if $\Julia_f$ is homeomorphic to $S^1$ or $[0,1]$. Moreover, if $\confdim(\Julia_f)=1$ is attained, then $f$ is a sub-hyperbolic rational map whose corresponding post-critically finite rational map is $z^d$, $1/z^d$, the degree-$d$ Chebyshev polynomial or the degree-$d$ negated Chebyshev polynomial up to conjugation by M\"{o}bius transformation.
\end{theorem}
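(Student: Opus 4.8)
The plan is to establish Theorem~\ref{theorem:SullivanDict} by combining the main theorem with a classification of the exceptional ``thin'' Julia sets. First I would observe that by Theorem~\ref{theorem:main}, if $\Y \hookrightarrow \Julia_f$, then $\hdim(\Julia_f, d) > 1$ for every metric $d$ in the canonical gauge $\mathcal{G}(f)$; since the visual/ambient metric on $\Julia_f$ lies in $\mathcal{G}(f)$ and since $\confdim(\Julia_f) = 1$ is defined as an infimum over the quasi-symmetry class (which, by Proposition~\ref{prop:CXCConj}, coincides with optimizing over $\mathcal{G}(f)$), attainment of the conformal dimension would force some gauge metric to have Hausdorff dimension exactly $1$, a contradiction. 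Hence attainment implies $\Y \not\hookrightarrow \Julia_f$. The key topological step is then to argue that a connected Julia set containing no copy of $\Y$ must be homeomorphic to $S^1$ or $[0,1]$: a connected, locally connected, compact metric space with no embedded triod (no $\Y$) is, by classical continuum theory, an arc or a simple closed curve (this is essentially the characterization of arcs and circles as the only triod-free Peano continua). For the converse direction, I would verify that when $\Julia_f \cong S^1$ or $[0,1]$ the conformal dimension $1$ is genuinely attained: the standard chordal/visual metric on such a Julia set is already quasi-symmetric to the round circle or the unit interval, both of which have Hausdorff dimension $1$, so the infimum is realized.

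Next I would address the ``Moreover'' clause, identifying the maps explicitly. Once we know $\Julia_f$ is an arc or circle, I would invoke the rigidity of the topological dynamics on such simple Julia sets. When $\Julia_f \cong S^1$, the Julia set is a quasicircle and $f$ restricted to it is topologically conjugate to a degree-$d$ expanding covering of the circle; the only semi-hyperbolic rational maps with circle Julia sets are, up to M\"{o}bius conjugacy, $z^d$ (with both complementary Fatou components super-attracting/attracting basins of $0$ and $\infty$) or $1/z^d$ (which swaps the two basins). When $\Julia_f \cong [0,1]$, the Julia set is an interval and $f$ acts as a degree-$d$ map folding the interval onto itself; the prototypical post-critically finite models are precisely the Chebyshev polynomials (which send $[-2,2]$ to itself as a branched cover with critical values at the endpoints) and their negatives. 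The classification into exactly these four PCF models should follow from the fact that an interval or circle Julia set severely constrains the critical orbit structure, together with the rigidity theorem of Thurston-type for post-critically finite maps.

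Finally I would connect the semi-hyperbolic map $f$ to its post-critically finite model. Using the structural result cited in the excerpt---that any sub-hyperbolic rational map is obtained from a unique post-critically finite map by quasi-conformal surgery on the Fatou set (\cite{McM_PCFCenter})---I would argue that a semi-hyperbolic $f$ with attained conformal dimension one is in fact sub-hyperbolic. The point is that attainment rules out recurrent behavior strong enough to create antennas, forcing the critical points into the preperiodic/periodic-or-attracted regime that defines sub-hyperbolicity; the arc/circle topology of $\Julia_f$ leaves no room for critical points whose forward orbits accumulate on the Julia set in a non-eventually-periodic way. Granting sub-hyperbolicity, the corresponding PCF center is a rational map with the same Julia set topology, hence one of the four listed normal forms.

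The hard part will be the last step: rigorously upgrading ``semi-hyperbolic with $\Julia_f \cong S^1$ or $[0,1]$ and attained conformal dimension'' to ``sub-hyperbolic,'' and then pinning down the PCF center as exactly one of $z^d$, $1/z^d$, $\pm$Chebyshev. The topological classification of triod-free continua as arcs or circles is classical and should be quotable, and the forward implication via Theorem~\ref{theorem:main} is essentially immediate; but proving that no semi-hyperbolic map with an interval or circle Julia set can have a wandering or merely non-recurrent critical orbit that is nonetheless not in an attracting basin---and thereby excluding maps that are semi- but not sub-hyperbolic---requires careful analysis of how critical points interact with a one-dimensional invariant Julia set. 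I expect this to lean on the fact that a critical point on an interval or circle Julia set must map to an endpoint/fixed combinatorial configuration, so its orbit is automatically eventually periodic, yielding sub-hyperbolicity for free.
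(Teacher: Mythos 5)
Your proposal follows essentially the same route as the paper: Theorem~\ref{theorem:main} plus the classification of triod-free Peano continua gives the ``only if'' direction (the paper proves this classification by hand via dendrite theory in Proposition~\ref{prop:NonSmoothExistY}), the rigidity of cxc dynamics on $S^1$ and $[0,1]$ (Theorem~\ref{thm:CxcOneDim}, established via Milnor--Thurston kneading theory) gives both attainment and the normal forms, the observation that branch points on an interval or circle Julia set must land on prefixed endpoints gives sub-hyperbolicity, and McMullen's result identifies the PCF center. Two points to tighten: the triod-free classification needs local connectivity of $\Julia_f$, which for connected Julia sets of semi-hyperbolic maps is a cited theorem rather than a free assumption; and your intermediate claim that every semi-hyperbolic map with circle Julia set is M\"obius-conjugate to $z^{\pm d}$ is an overstatement---the correct statement, which your final paragraph recovers, is that the restriction to the Julia set is quasi-symmetrically conjugate to $z^{\pm d}$ and that the associated post-critically finite center is $z^{\pm d}$.
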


The degree-$d$ negated Chebyshev polynomials are (-1) times the degree-$d$ Chebyshev polynomials.

Since $\ARConfdim\ge \confdim$ in general, Theorem \ref{theorem:SullivanDict} holds even if we change the conformal dimension to Ahlfors-regular conformal dimension. Theorem \ref{theorem:SullivanDict} may be considered as a complex dynamical analogue of Theorem \ref{thm:BK_Rigidity} for one dimension.
\medskip

Julia sets of semi-hyperbolic quadratic polynomials are related to fractals generated by classical iterated function systems \cite{Rohde:IFS}. For instance, the Julia set of $z^2+i$ is shown to be quasi-symmetric equivalent to a universal object called the {\em continuum self-similar tree}, see \cite{BM:geodesictree} and \cite{BM:uniformlybranchingtrees}. The following is then an immediate corollary of Theorem \ref{theorem:main}.

\begin{corollary}
\label{corollary:tree}
The conformal dimension of the continuum self-similar tree, which is equal to $1$, is not attained.
\end{corollary}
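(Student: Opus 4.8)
The plan is to identify the continuum self-similar tree (CSST) as the Julia set of a semi-hyperbolic rational map to which Theorem \ref{theorem:main} (and its Corollary \ref{cor:main}) applies.

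\begin{proof}[Proof proposal]
The plan is to recognize the continuum self-similar tree as the image, under the quasi-symmetric equivalence cited from \cite{BM:geodesictree} and \cite{BM:uniformlybranchingtrees}, of the Julia set $\Julia_f$ of the polynomial $f(z)=z^2+i$. First I would recall that $z^2+i$ is a post-critically finite (hence semi-hyperbolic) quadratic polynomial: its critical point $0$ is strictly preperiodic, landing on the repelling cycle $\{i,\,i-1\}$ of period two. Consequently $f$ acts on $\Julia_f$ as a metric cxc dynamical system, and the CSST, being quasi-symmetric to $\Julia_f$, lies in the same quasi-symmetric equivalence class; since conformal dimension, Hausdorff dimension thresholds, and attainment are all quasi-symmetric invariants, every conclusion proved for $\Julia_f$ transfers verbatim to the CSST.

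Next I would verify that $\Julia_f$ contains a homeomorphic copy of $\Y$. The CSST is by construction a topological tree with branch points of valence three, so it certainly contains triple points; equivalently, $\Julia_f$ is known to be a dendrite in which the branch point orbit produces points of local valence three, giving $\Y\hookrightarrow\Julia_f$. Having established $\Y\hookrightarrow\Julia_f$ and that $f$ is (topological and metric) cxc, Theorem \ref{theorem:main} applies and yields $\hdim(\Julia_f,d)>1$ for every metric $d$ in the canonical gauge $\mathcal{G}(f)$.

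Finally I would combine this with the hypothesis, recorded in the statement, that the conformal dimension of the CSST equals $1$. Since $\Julia_f\sim_{\mathrm{qs}}\mathrm{CSST}$ and conformal dimension is a quasi-symmetric invariant, $\confdim((\Julia_f,\mathcal{G}(f)))=1$ as well. Corollary \ref{cor:main} then tells us that this value $1$ cannot be attained by any metric in $\mathcal{G}(f)$: any such metric $d$ would have $\hdim(\Julia_f,d)>1$, contradicting attainment. Pulling this conclusion back through the quasi-symmetry shows the conformal dimension of the continuum self-similar tree is not attained either.

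The main obstacle I anticipate is purely expository rather than mathematical: one must make precise the fact that the CSST belongs to the canonical gauge $\mathcal{G}(f)$ of a genuine cxc map, so that Corollary \ref{cor:main} is literally applicable, rather than merely being an abstract self-similar tree. This requires citing the identification of the CSST with $\Julia_{z^2+i}$ up to quasi-symmetry and invoking that $\mathcal{G}(f)$ is a single quasi-symmetric class (Proposition \ref{prop:CXCConj}); once that identification is in hand, the dimension statement and the non-attainment both follow immediately from Theorem \ref{theorem:main} and Corollary \ref{cor:main} with no further computation.
\end{proof}
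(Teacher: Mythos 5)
Your proposal is correct and follows essentially the same route the paper intends: identify the continuum self-similar tree up to quasi-symmetry with $\Julia_{z^2+i}$, note that $z^2+i$ is post-critically finite (hence semi-hyperbolic, hence metric cxc on its dendrite Julia set, which clearly contains a copy of $\Y$), and apply Theorem \ref{theorem:main} together with the quasi-symmetric invariance of conformal dimension and of attainment. One small factual slip: the critical orbit of $z^2+i$ is $0\mapsto i\mapsto i-1\mapsto -i\mapsto i-1$, so the period-two repelling cycle is $\{i-1,-i\}$ rather than $\{i,i-1\}$; this does not affect the argument.
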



\subsection*{Acknowledgements} The authors are grateful to Kevin Pilgrim and Dylan Thurston for useful conversation.

The first author was supported by the Simons Foundation Institute Grant Award ID 507536 while the author was in residence at the Institute for Computational and Experimental Research in Mathematics in Providence, RI.

\section{Background}\label{sec:Background}

\subsection{Finite branched coverings (fbc's)}  
\label{subsecn:fbcs} In this subsection, we assume spaces $X$ and $Z$ are compact, Hausdorff, connected, and locally connected topological spaces, and $f: X \to Z$ is a finite-to-one continuous map. We define the {\em degree} of $f$ by
\[
\deg(f):=\sup\{\# f^{-1}(z) : z \in Z\}
\]
and the {\em local degree} of $f$ at $x\in X$ by
\[
\deg(f;x):=\inf_U\sup\{\# f^{-1}(z) \cap U : z \in f(U)\}
\]
where the infimum is taken over all neighborhoods $U$ of $x$.

\begin{defn}[Finite branched covering]
The map $f: X \to Z$ is a {\em finite branched covering}
(abbreviated fbc) provided $\deg(f)<\infty$ and 
\begin{itemize} 
\item[(i)]
\[
\sum_{x \in f^{-1}(z)} \deg(f;x) = \deg(f)
\]
holds for each $z \in Z$;
\item[(ii)] for every $x_0 \in X$, there are compact neighborhoods
$U$ and $V$ of $x_0$ and $f(x_0)$ respectively such that
\[ \sum_{x \in U, f(x)=z} \deg(f; x) = \deg(f; x_0)\]
for all $z\in V$.
\end{itemize}  
\end{defn}

The following properties of fbc's are shown in \cite[Lemma 2.1.2 and Lemma 2.1.3]{kmp:ph:cxci}: For an fbc $f:X \to Z$,
\begin{itemize}
    \item $f$ is open, closed, onto, and proper;
    \item the set of {\em branch points} $B_f:= \{x \in X : \deg(f;x)>1\}$ is nowhere dense in $X$;
    \item the set of {\em branch values} $V_f:=f(B_f)$ is nowhere dense in $Z$;
    \item if $U \subset Z$ is open and connected, then its inverse image $f^{-1}(U)$ is a union of disjoint open subsets $f^{-1}(U)=\wtU_1 \sqcup \ldots \sqcup \wtU_m$ where $f: \wtU_i \to U$ is an fbc of degree $d_i$, and $d_1+\ldots+d_k=\deg(f)$.  
\end{itemize}
We refer the reader to \cite{edmonds:fbc} and \cite{kmp:ph:cxci} for more details on finite branched coverings.

\begin{lem}[Path-lifting for fbc's]
\label{lemma:path-lifting}
Finite branched covers $f:X\to Z$ have the path-lifting property: For a continuous map $\gamma: [0,1] \to Z$ and $x_0 \in f^{-1}(\gamma(0))$, there exists a continuous map $\widetilde{\gamma}: [0,1] \to X$ with $\widetilde{\gamma}(0)=x_0$ and $f \circ \widetilde{\gamma}=\gamma$.
\end{lem}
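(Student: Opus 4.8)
The plan is to construct the lift by a continuation argument, producing a \emph{maximal} partial lift and showing it must be defined on all of $[0,1]$. I would consider the poset of pairs $(J,\tilde\gamma_J)$, where $J\subseteq[0,1]$ is a subinterval containing $0$ and $\tilde\gamma_J:J\to X$ is continuous with $\tilde\gamma_J(0)=x_0$ and $f\circ\tilde\gamma_J=\gamma|_J$, ordered by extension. The union of a chain of such lifts is again such a lift, its domain being the interval $\bigcup J_\alpha$ containing $0$, so Zorn's lemma furnishes a maximal partial lift $\tilde\gamma$ defined on some $J^*=[0,b)$ or $[0,b]$. I would then establish two facts: (a) any lift defined on a half-open interval $[0,b)$ extends continuously to $[0,b]$; and (b) any lift defined on $[0,b]$ with $b<1$ extends to $[0,b+\delta]$ for some $\delta>0$. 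Granting (a) and (b), maximality forces $J^*=[0,1]$, which is the assertion.

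Statement (a), existence of the left-hand limit, is the clean step and rests only on finiteness of the fibers together with $f$ being closed and proper. Writing $z_b=\gamma(b)$, the fiber $f^{-1}(z_b)=\{y_1,\ldots,y_k\}$ is finite since $\deg(f)<\infty$. Choose pairwise disjoint open neighborhoods $O_i\ni y_i$. Because $f$ is closed, $f(X\setminus\bigcup_i O_i)$ is closed and omits $z_b$, so there is a neighborhood $N$ of $z_b$ with $f^{-1}(N)\subseteq\bigcup_i O_i$. For $t$ near $b$ we have $\gamma(t)\in N$, hence $\tilde\gamma(t)\in\bigcup_i O_i$; as $\tilde\gamma$ is continuous on the connected set $[t_0,b)$ and the $O_i$ are disjoint and open, $\tilde\gamma([t_0,b))$ lies in a single $O_1$. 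Every subsequential limit of $\tilde\gamma(t)$ as $t\to b^-$ then lies in $f^{-1}(z_b)\cap\overline{O_1}=\{y_1\}$, so the limit exists; setting $\tilde\gamma(b)=y_1$ extends the lift continuously.

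Statement (b), local extendability, is the main obstacle, and it is where the branched structure and axiom (ii) of an fbc enter. Put $y=\tilde\gamma(b)$ and $z=\gamma(b)=f(y)$. Using axiom (ii) I would select compact neighborhoods $U\ni y$ and $V\ni z$ realizing $\sum_{w\in U,\,f(w)=z}\deg(f;w)=\deg(f;y)$; evaluating at $z$ forces $U\cap f^{-1}(z)=\{y\}$, so the fiber over $z$ is concentrated at $y$. Shrinking $V$ to a connected open neighborhood and letting $W$ be the component of $f^{-1}(V)$ containing $y$, arranged inside $U$, the decomposition property gives an fbc $g=f|_W:W\to V$ with $g^{-1}(z)=\{y\}$. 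Over $V\setminus V_f$ the map $g$ has no branch points and is a proper local homeomorphism, hence a covering map, so covering-space path lifting produces unique lifts of $\gamma$ on each subinterval of $[b,b+\delta]$ (with $\delta$ small enough that $\gamma([b,b+\delta])\subseteq V$) on which $\gamma$ avoids the closed nowhere-dense set $V_f$. These partial lifts are then glued across the closed set $\gamma^{-1}(V_f)$ by repeating the limit argument of (a) inside $W$: finiteness of fibers and properness force convergence to a single preimage, while the concentration $g^{-1}(z)=\{y\}$ guarantees that whenever $\gamma$ returns to $z$ the lift returns to $y$, yielding one continuous lift on $[b,b+\delta]$ starting at $y$. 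Concatenating with $\tilde\gamma$ extends the lift past $b$.

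The genuinely delicate point throughout is controlling the lift as $\gamma$ crosses the branch locus, possibly on a complicated (e.g.\ Cantor-like) subset of the parameter interval; this is precisely what the maximal-lift formulation together with the finite-fiber-plus-closedness limit argument is designed to absorb, so that no uniform bound on the combinatorics of $\gamma^{-1}(V_f)$ is needed.
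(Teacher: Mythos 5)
Your overall strategy (Zorn's lemma on partial lifts, plus (a) continuity up to the closed endpoint and (b) local extendability past it) is sound in outline, and step (a) is correct: finiteness of fibers, closedness of $f$, and compactness of $X$ do force the left-hand limit to exist and to be the unique preimage in the distinguished neighborhood. For comparison, the paper proves none of this directly: it observes that an fbc is an open (``interior'') map whose fibers are totally disconnected (``light'') and invokes Floyd's theorem on path lifting for light interior maps, so the entire content of the lemma is outsourced to that citation.

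The genuine gap is in step (b), at the sentence ``these partial lifts are then glued across the closed set $\gamma^{-1}(V_f)$ by repeating the limit argument of (a).'' The limit argument of (a) tells you that a lift \emph{already defined} on a half-open interval extends to its endpoint; it does not tell you how to choose the covering-space lifts on the (possibly infinitely many, densely ordered) components of $[b,b+\delta]\setminus\gamma^{-1}(V_f)$ so that the left and right limits agree at each point of $\gamma^{-1}(V_f)$. If $\gamma^{-1}(V_f)\cap[b,b+\delta]$ is finite, or even an increasing sequence, one can induct from left to right, on each new component selecting a lift whose limit at the left endpoint matches the value already assigned there (such a lift exists by condition (ii) applied at that value); but $\gamma^{-1}(V_f)$ can be a Cantor set accumulating at $b$ from the right, and then there is no first component from which to start the induction. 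Your maximality formulation does not absorb this: step (b) must still produce a lift on a full interval $[b,b+\delta']$ with $\delta'>0$, and every such interval may contain a Cantor set of branch times. Worse, $V_f$ is only nowhere dense, not discrete (e.g.\ $f(x,y)=(x,\,1-|2y-1|)$ on the square is an fbc whose branch value set is a segment), so $\gamma$ may map an entire subinterval $[b,b']$ into $V_f$, in which case the covering-space argument over $V\setminus V_f$ produces nothing at all. This crossing-of-the-branch-locus difficulty is precisely the nontrivial content of Floyd's theorem, and your proposal asserts it rather than proves it.
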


\begin{proof} An fbc is ``interior'' (sending open sets to open sets) and ``light'' (fibers being totally disconnected); cf. \cite{Wallace_QuasimonotoneTransform}. The lemma follows from \cite[Theorem 2]{Floyd_InteriorMaps}.
\end{proof}

Recall that $o$ denotes the center of the space $\Y$, i.e., the point where the three arms meet. 

\begin{cor}
\label{cor:ylifting}
If $f: X \to Z$ is an fbc and $\gamma: \Y \hookrightarrow Z$ and $x_0 \in f^{-1}(\gamma(o))$ then there is a lift $\widetilde{\gamma}: \Y \hookrightarrow X$ with $\widetilde{\gamma}(o)=x_0$ and $f\circ \widetilde{\gamma}=\gamma$. 
\end{cor}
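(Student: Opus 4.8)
The plan is to reduce the statement to the path-lifting lemma (Lemma~\ref{lemma:path-lifting}) applied separately to each of the three arms of $\Y$, and then to promote the resulting continuous lift to an embedding by exploiting the injectivity of $\gamma$. Reparametrize each arm $[0,e_i]$ of $\Y$ as a path $\alpha_i\colon [0,1]\to\Y$ with $\alpha_i(0)=o$ and $\alpha_i(1)=e_i$, so that $\gamma\circ\alpha_i\colon [0,1]\to Z$ is a path beginning at $\gamma(o)$. For each $i$, I would apply Lemma~\ref{lemma:path-lifting} to the path $\gamma\circ\alpha_i$ and the point $x_0\in f^{-1}(\gamma(o))$ to obtain a lift $\widetilde{\beta}_i\colon [0,1]\to X$ with $\widetilde{\beta}_i(0)=x_0$ and $f\circ\widetilde{\beta}_i=\gamma\circ\alpha_i$.

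Next I would assemble these into a single map. The three lifts all take the value $x_0$ at parameter $0$, which corresponds to the point $o$ where the arms of $\Y$ are glued; hence they agree on overlaps and define a map $\widetilde{\gamma}\colon\Y\to X$. This $\widetilde{\gamma}$ is continuous by the pasting lemma, since each arm is closed in $\Y$ and there are finitely many of them, and it satisfies $\widetilde{\gamma}(o)=x_0$ and $f\circ\widetilde{\gamma}=\gamma$ (the latter holding on each arm, hence everywhere). It remains only to show that $\widetilde{\gamma}$ is injective, for then a continuous injection from the compact space $\Y$ into the Hausdorff space $X$ is automatically a homeomorphism onto its image, giving the desired embedding $\widetilde{\gamma}\colon\Y\hookrightarrow X$.

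The crux is the injectivity of $\widetilde{\gamma}$, and this is where the embedding hypothesis on $\gamma$ does the work. On a single arm $[0,e_i]$ we have $f\circ\widetilde{\gamma}|_{[0,e_i]}=\gamma|_{[0,e_i]}$, which is injective because $\gamma$ is an embedding; so $\widetilde{\gamma}$ is injective on each arm. For two distinct arms, suppose $\widetilde{\gamma}(p)=\widetilde{\gamma}(q)$ with $p\in[0,e_i]$ and $q\in[0,e_j]$, $i\neq j$. Applying $f$ gives $\gamma(p)=\gamma(q)$; since $\gamma$ is injective and the arms of $\Y$ meet only at $o$, the images $\gamma([0,e_i])$ and $\gamma([0,e_j])$ intersect only in $\gamma(o)$, forcing $p=q=o$. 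Thus the images of distinct arms meet only at $x_0=\widetilde{\gamma}(o)$, and combined with injectivity along each arm this shows $\widetilde{\gamma}$ is globally injective.

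I expect the main (and essentially only) obstacle to be this cross-arm injectivity — ruling out that two lifted arms cross, or that a lifted arm wanders back to meet another away from $x_0$. The obstruction dissolves once one observes that any coincidence of lifted points projects under $f$ to a coincidence of $\gamma$-values, which the embedding hypothesis forbids except at the center $o$; everything else is the routine bookkeeping of reparametrizing, invoking Lemma~\ref{lemma:path-lifting} three times, and gluing.
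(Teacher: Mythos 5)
Your proof is correct and matches the intended argument: the paper states this as an immediate corollary of Lemma~\ref{lemma:path-lifting}, obtained exactly by lifting each arm from $x_0$ and gluing. Note that injectivity of $\widetilde{\gamma}$ is even more immediate than your case analysis suggests: since $f\circ\widetilde{\gamma}=\gamma$ is injective, $\widetilde{\gamma}$ is automatically injective.
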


\subsection{Coarse expanding conformal systems}

When we consider a topological cxc system $f: X \to X$, we always assume that $X$ is non-singleton, compact, connected, locally connected, and metrizable.

Let $\UUU_0$ be a cover of $X$ consisting of finitely many open connected subsets. For any $n>0$, we inductively define a cover $\UUU_n$ by the collection of connected components of inverse images of elements of $\UUU_n$ under $f$. We define $\mathbf{U}:=\bigcup_{n \geq 0} \UUU_n$ the collection of all open sets in the covers $\UUU_n$'s. If $U \in \UUU_n$, then we say that the {\em level} of $U$ is $n$ and write $|U|=n$.

\subsection*{Topological cxc systems} The dynamical system $f: X \to X$ is \emph{topological cxc} if there is a finite open cover $\UUU_0$ satisfying the following three properties:

\begin{itemize}
    \item (Expansion) $\mesh(\UUU_n)\to 0$ as $n\to \infty$. More precisely, for any open cover $\mathcal{V}$ of $X$, there is $N>0$ such that for any $U\in \UUU_n$ with $n>N$ there is $V\in \mathcal{V}$ satisfying $U\subset V$. If $X$ is a metric space, it is equivalent to $\max\{\diam U : U \in \UUU_n\}\to 0$ as $n\to \infty$.
    \item (Degree)
    \[
        \max\{\deg(f^k:\widetilde{U}\to U) : k\in \N,~\widetilde{U}\in\UUU_k,~U\in\UUU_0 \}<\infty.
    \]
    \item (Irreducibility) For any open set $W \subset X$, there exists $n\in N$ with $f^n(W)=X$.
\end{itemize}

\subsection*{Metric cxc systems}

For a metric space $(X,d_X)$, a dynamical system $f: X \to X$ is \emph{metric cxc} if it is topological cxc about a finite open cover $\UUU_0$ and, in addition, the following conditions hold for the same $\UUU_0$: There exist
\begin{itemize}
\item continuous, increasing embeddings $\rho_{\pm}:[1,\infty) \to [1,\infty)$, called the {\em forward and backward
roundness distortion functions}, and
\item increasing homeomorphisms $\delta_{\pm}:[0,1] \to [0,1]$, called the {\em forward and backward relative
diameter distortion functions},
\end{itemize}
such that the following two properties hold.
\begin{itemize}
\item (Roundness distortion) For all $n, k \in \bbN$ and for all $U \in \UUU_n,~\wtU \in \UUU_{n+k},~\tilde{y} \in \wtU$, and $y \in U$, if $f^{\circ k}(\wtU) = U$ and $f^{\circ k}(\tilde{y}) = y$, 
then we have the {\em backward roundness bound}
\[
    \roundness(\wtU, \tilde{y}) < \rho_-(\roundness(U,y)),
\]
and the {\em forward roundness bound}
\[
    \roundness(U,y) < \rho_+(\roundness(\wtU, \tilde{y})),
\]
where for an interior point $a$ of $A$ the \textit{roundness of $A$ about $a$} is defined as
\[
\roundness(A, a) = \frac{ \sup\{ |x-a|: x \in A\}}{\sup\{r: B(a,r) \subset A\}}.
\]
\item (Diameter distortion) For all $n_0, n_1, k \in \bbN$ and for all $U \in \UUU_{n_0},~U' \in \UUU_{n_1},~\wtU \in \UUU_{n_0+k}$, and $\wtU'\in \UUU_{n_1+k}$ with $\wtU' \subset \wtU$ and $U' \subset U$,
if $f^k(\wtU) = U$ and $f^k(\wtU') = U'$, then
\[
\frac{\diam\wtU'}{\diam\wtU} < \delta_-\left(\frac{\diam U'}{\diam U}\right)
\]
and
\[
\frac{\diam U'}{\diam U} < \delta_+\left(\frac{\diam\wtU'}{\diam\wtU}\right).
\]
\angela{
In other words:  given two nested elements of $\mathbf{U}$, iterates of $f$ both forward and backward distort their relative sizes by an amount independent of the iterate.
As a consequence, one has then also the {\em backward upper and lower relative diameter
bounds}:
\begin{equation}
\label{eqn:brdb}
\delta_+^{-1}\left(\frac{\diam U'}{\diam U}\right) <
\frac{\diam\wtU'}{\diam \wtU} < \delta_-\left(\frac{\diam U'}{\diam
U}\right)
\end{equation}
and the {\em forward upper and lower relative diameter bounds}:
\begin{equation}
\label{eqn:frdb}
\delta_-^{-1}\left(\frac{\diam\wtU'}{\diam\wtU}\right) < \frac{\diam
U'}{\diam U} < \delta_+\left(\frac{\diam\wtU'}{\diam\wtU}\right).
\end{equation}
}
\end{itemize}
From \cite{kmp:ph:cxci}, we have that the property of being metric cxc is preserved by quasi-symmetric conjugacies. 

\subsection*{Visual metrics}
\label{subsecn:visual}

%
A dynamical system $f: X \to X$ that is topological cxc with respect to a finite open cover $\UUU_0$ determines a class of so-called \emph{visual metrics}. Visual metrics are compatible with the dynamics and $\UUU_0$ in a sense that any element in $\bf{U}$ can be thought of as a ball whose size exponentially decays as the level increases. The following results summarize key properties of the visual metrics; see \cite[Chapter 3]{kmp:ph:cxci}.

\begin{prop}[Visual metric, {\cite[Proposition 3.3.2 and Proposition 3.2.3]{kmp:ph:cxci}}]
\label{prop:visual}
Let $(X,d_X)$ be a metric space. Suppose that $f: X \to X$ is metric cxc with respect to an open cover $\UUU_0$. There exist $\epsilon>0$ and a metric $d$, called a visual metric, on $X$ that is quasi-symmetric to $d_X$ such that the following estimates hold for $(X,d)$:
\begin{enumerate}
\item (Nearly balls, I) There is some constant $C>1$ such that, for all $W\in \mathbf{U}$, there is a point $x \in W$ so that
$$B(x,(1/C)e^{-\epsilon|W|}) \subset   
W \subset B(x,C e^{-\epsilon|W|}).$$
\item (Nearly balls, II) There is a radius $r_1>0$ such that, for any $n\geq 1$ and for any $x \in X$, there
is some $W\in \UUU_n$ so that $B(x,r_1 e^{-\epsilon n})\subset W$.
\item  (Nearly balls, III) There exists $r_0$ such that, for any $r\in (0,r_0)$ and any  $x \in X$,
there exist $W$ and $W'$ in $\mathbf{U}$ such that $|W|-|W'|=O(1)$,
\[
x \in W'\subset B(x,r)\subset W,
\]
and $$\max\{\roundness(W,x),\roundness(W',x)\}=O(1).$$
\item (Homothety) For every $x,y \in X$, $d(x,y) \geq e^{-\vep} \cdot d(f(x), f(y)) $. 
\end{enumerate}
\end{prop}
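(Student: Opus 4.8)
The plan is to build the visual metric purely combinatorially from the nesting and overlap structure of $\mathbf{U}=\bigcup_n \UUU_n$, and only afterwards to compare it with the given metric $d_X$. First I would fix a small parameter $\epsilon>0$ and assign to each $W\in\mathbf{U}$ the weight $e^{-\epsilon|W|}$, the guiding idea being that a level-$n$ set should behave like a ball of that radius. For $x,y\in X$ define the \emph{separation level}
\[
\ell(x,y)=\max\{\, n\ge 0 : x,y\in W\cup W' \text{ for some } W,W'\in\UUU_n \text{ with } W\cap W'\ne\emptyset \,\},
\]
and set $\rho_\epsilon(x,y)=e^{-\epsilon\,\ell(x,y)}$. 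Since $\UUU_n$ covers $X$ one has $\ell(x,x)=\infty$, while the (Expansion) axiom forces $\mesh(\UUU_n)\to 0$, so $\ell(x,y)<\infty$ whenever $x\ne y$; hence $\rho_\epsilon$ separates points, and symmetry is clear.

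The combinatorial core is a bounded-overlap estimate of Gromov-product type, namely $\ell(x,z)\ge\min(\ell(x,y),\ell(y,z))-k_0$ for a constant $k_0$ depending only on $\UUU_0$. This holds because at each level only a uniformly bounded number of sets meet a given one, so the short chains witnessing $\ell(x,y)$ and $\ell(y,z)$ can be concatenated into a bounded chain which, at a uniformly coarser level, is absorbed into a single pair of overlapping sets. Translating the estimate gives $\rho_\epsilon(x,z)\le e^{\epsilon k_0}\max(\rho_\epsilon(x,y),\rho_\epsilon(y,z))$, so $\rho_\epsilon$ is a quasi-ultrametric. Choosing $\epsilon$ small enough that $e^{\epsilon k_0}\le 2$ and invoking Frink's metrization lemma (or the Mac\'ias--Segovia theorem) yields a genuine metric $d$ with $d\asymp\rho_\epsilon$; this $d$ is the desired visual metric.

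Properties (1)--(3) are then read off the level function. A set $W\in\UUU_n$ has $d$-diameter $\asymp e^{-\epsilon n}$ and, because each level set is joined to its neighbours by overlaps of bounded multiplicity, it both contains and is contained in $d$-balls of radius comparable to $e^{-\epsilon n}$ about an interior point, giving (1) and the bounded roundness in (3); sandwiching an arbitrary ball $B(x,r)$ between elements whose levels differ by $O(1)$, as in (2)--(3), follows by comparing $r$ with the geometric scale $e^{-\epsilon n}$. For (Homothety), since $f$ maps each level-$(n+1)$ element onto a level-$n$ element, the image of a chain witnessing $\ell(x,y)=n$ witnesses $\ell(f(x),f(y))\ge n-1$, so $\rho_\epsilon(f(x),f(y))\le e^{\epsilon}\rho_\epsilon(x,y)$ exactly; the bound $d(x,y)\ge e^{-\epsilon}d(f(x),f(y))$ then follows for $d$.

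The remaining and most substantial point is that $d$ is quasi-symmetric to the original metric $d_X$, and this is precisely where the \emph{metric} cxc hypotheses enter: the construction above uses only the topological data, so a priori $d$ could be badly distorted relative to $d_X$. The roundness distortion functions $\rho_\pm$ and the diameter distortion functions $\delta_\pm$ supply uniform, scale-independent control, so that every $W\in\mathbf{U}$ is $d_X$-round with bounded roundness and every nested pair $W'\subset W$ has comparable relative diameters in $d_X$ and in $d$; from this one controls the distortion of the ratio $d_X(x,y)/d_X(x,z)$ by a single homeomorphism $\eta$, which is the definition of a quasi-symmetry. I expect this last step to be the main obstacle: the combinatorial half is essentially formal once the bounded-overlap inequality is in hand, whereas the comparison with $d_X$ must exploit both distortion axioms simultaneously and propagate the resulting bounds uniformly across all scales, which is the technical heart of \cite[Chapter 3]{kmp:ph:cxci}.
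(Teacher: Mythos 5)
First, note that the paper does not prove Proposition \ref{prop:visual} at all: it is imported verbatim from Ha\"issinsky--Pilgrim \cite{kmp:ph:cxci} (Propositions 3.2.3 and 3.3.2), so there is no in-paper argument to compare against. Your outline does track the actual construction in \cite[Chapter 3]{kmp:ph:cxci} reasonably well at the architectural level: there one forms a graph $\Gamma$ whose vertices are the elements of $\mathbf{U}$ together with a basepoint, proves that $\Gamma$ is Gromov hyperbolic, identifies $\partial_\infty\Gamma$ with $X$, and takes $d$ to be a visual metric in the sense of hyperbolic geometry. Your ``separation level'' $\ell(x,y)$ is essentially the Gromov product in that graph, and the passage from the quasi-ultrametric $\rho_\epsilon$ to a genuine metric via Frink's chain construction is exactly the standard step, valid once $\epsilon$ is small relative to the hyperbolicity constant.

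The genuine gap is in your justification of the key inequality $\ell(x,z)\ge\min(\ell(x,y),\ell(y,z))-k_0$. You derive it from the claim that ``at each level only a uniformly bounded number of sets meet a given one,'' but bounded overlap of the covers $\UUU_n$ is not among the cxc axioms and does not follow from them in general. What is actually needed, and what Ha\"issinsky--Pilgrim prove, is a uniform \emph{vertical} statement: there is a constant $k_0$ such that any two overlapping elements of $\UUU_n$ are jointly contained in a single element of $\UUU_{n-k_0}$, uniformly in $n$. This is precisely the content of the Gromov hyperbolicity of $\Gamma$, and its proof genuinely uses (Expansion) and (Degree) together with the distortion functions $\rho_\pm,\delta_\pm$; it is not a formal consequence of the finiteness of the covers. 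A smaller issue: as defined, $\ell(x,y)$ may be the maximum of an empty set for points not jointly covered by two overlapping elements of $\UUU_0$, which is one reason the reference works with the Gromov product relative to a basepoint instead. You are right that the quasi-symmetry of $d$ with $d_X$ is where the metric axioms enter and that it is technically substantial, but the combinatorial half is not ``essentially formal'' either --- the hyperbolicity estimate is where the dynamics is really used.
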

In fact, we have $d(x,y)=e^{-\vep} \cdot d(f(x),f(y))$ for $x,y\in B$ where $B$ is a sufficiently small ball that does not contain a branch point \cite[Proposition 3.2.3]{kmp:ph:cxci}. 

The following proposition implies that a visual metric is a canonical metric, in some sense, with which the topological cxc map becomes metric cxc.

\begin{prop}[{\cite[Theorem 3.5.3 and Corollary 3.5.4]{kmp:ph:cxci}}]\label{prop:CXCConj}
Let $f:X\to X$ be a topological cxc dynamical system. There is a quasi-symmetric equivalence class $\mathcal{G}(f)$ of metrics on $X$, called the canonical gauge, such that
\begin{itemize}
    \item visual metrics are in $\mathcal{G}(f)$, and
    \item $d\in \mathcal{G}(f)$ if and only if $f$ is metric cxc with respect to $d$.
\end{itemize}
Hence, if two metric cxc dynamical systems $f:X\to X$ and $g:Z \to Z$ are topologically conjugate by a homeomorphism $h:X\to Z$, then $h$ is a quasi-symmetric conjugacy.
\end{prop}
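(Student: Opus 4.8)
The plan is to realize the canonical gauge concretely as the quasi-symmetry class of a single visual metric, and to show that being metric cxc determines a metric exactly up to quasi-symmetry. Write $\cM(f)$ for the collection of all metrics $d$ on $X$ inducing the given topology for which $f$ is metric cxc with respect to the cover $\UUU_0$. The assertion then splits into three pieces: $\cM(f)$ is nonempty, $\cM(f)$ is a single quasi-symmetry class, and this class has the stated characterization; the final ``Hence'' clause will follow formally.

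First I would establish $\cM(f)\neq\emptyset$ by constructing a visual metric directly from the topological cxc data. Using only the incidence structure of the covers $\UUU_n$ and the separation level of pairs of points, one builds a metric $d_{\mathrm{vis}}$ in which an element $W\in\mathbf{U}$ of level $n$ has diameter comparable to $e^{-\epsilon n}$; the expansion axiom ($\mesh(\UUU_n)\to 0$) guarantees distinct points receive positive distance, while the degree and irreducibility axioms allow one to verify the roundness and diameter distortion bounds, producing the distortion functions $\rho_\pm,\delta_\pm$. This is the content summarized in Proposition~\ref{prop:visual}, and it shows $d_{\mathrm{vis}}\in\cM(f)$. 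Next, one inclusion of the biconditional is easy: if $d\in\cM(f)$ and $d'$ is quasi-symmetric to $d$, then $\mathrm{id}\colon(X,d)\to(X,d')$ is a quasi-symmetry conjugating $f$ to itself, and since being metric cxc is preserved under quasi-symmetric conjugacy, $d'\in\cM(f)$. Thus $\cM(f)$ is closed under quasi-symmetry.

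The main step is the reverse inclusion: any $d\in\cM(f)$ is quasi-symmetric to $d_{\mathrm{vis}}$ (so by transitivity $\cM(f)$ is a single class). Here I would exploit that the metric cxc axioms force $d$ to be adapted to the common combinatorial structure $\mathbf{U}$. Iterating the forward and backward diameter distortion bounds along the dynamics shows that, for a fixed choice of elements $W_n(x)\in\UUU_n$ containing $x$, the diameters $\diam_d W_n(x)$ decay geometrically at uniformly bounded rates with uniformly bounded roundness about $x$, independently of the basepoint. This uniformity over all scales and points is exactly the conformal elevator: every small ball is, up to bounded distortion, the image under an inverse branch of some $f^{\circ n}$ of one of the finitely many definite-size elements of $\UUU_0$. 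Consequently each ball $B_d(x,r)$ is sandwiched between elements $W'\subset B_d(x,r)\subset W$ of $\mathbf{U}$ with bounded roundness and $|W|-|W'|=O(1)$, and the level $n_d(x,r)$ of these elements is comparable to $\log(1/r)$ up to uniform multiplicative and additive constants, with increments $n_d(x,s)-n_d(x,r)$ controlled by $\log(r/s)$.

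To then verify quasi-symmetry of $\mathrm{id}\colon(X,d)\to(X,d_{\mathrm{vis}})$, I would fix $x,y,z$ with $x\neq z$ and set $r=d(x,z)$, $s=d(x,y)$; translating through this nearly-balls dictionary replaces $d_{\mathrm{vis}}(x,y)$ and $d_{\mathrm{vis}}(x,z)$ by diameters of the combinatorial elements $W_{n_d(x,s)}(x)$ and $W_{n_d(x,r)}(x)$, whose ratio is controlled by the geometric decay bounds raised to the power $n_d(x,s)-n_d(x,r)$, giving a power-type bound on $d_{\mathrm{vis}}(x,y)/d_{\mathrm{vis}}(x,z)$ in terms of $s/r=d(x,y)/d(x,z)$ and hence a control function $\eta$. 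Running the symmetric argument yields quasi-symmetry, so $\cM(f)=\mathcal{G}(f):=[d_{\mathrm{vis}}]$, which contains all visual metrics. Finally, given a topological conjugacy $h\colon(X,d)\to(Z,d')$ between metric cxc systems, the pullback metric $h^*d'$ makes $f$ metric cxc (conjugating $g$ by the isometry $h$), so $h^*d'\in\mathcal{G}(f)$ is quasi-symmetric to $d$; composing with $h$ shows $h$ is a quasi-symmetry. I expect the geometric-decay-with-uniform-roundness step to be the main obstacle: converting the scale-by-scale distortion control of the cxc axioms into genuinely scale- and basepoint-uniform comparison constants, and absorbing the additive $O(1)$ level errors into a single $\eta$, is where the full force of expansion together with the conformal elevator is concentrated.
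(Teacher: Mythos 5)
The paper does not prove this proposition at all: it is imported verbatim from Ha\"issinsky--Pilgrim \cite{kmp:ph:cxci} (their Theorem 3.5.3 and Corollary 3.5.4), so there is no internal argument to compare yours against. Judged on its own terms, your outline follows the same broad strategy as the cited source --- construct a visual metric from the combinatorics of $\mathbf{U}$, observe that the metric cxc property is quasi-symmetry invariant so the class is closed under quasi-symmetry, prove the converse inclusion via the conformal elevator, and deduce the ``Hence'' clause by pulling back the target metric along $h$ (that last step is correct and is essentially how Corollary 3.5.4 follows from Theorem 3.5.3). Your route is somewhat more direct than Ha\"issinsky--Pilgrim's, who factor the uniqueness statement through a Gromov-hyperbolic graph $\Gamma$ built from $\mathbf{U}$ and identify $\mathcal{G}(f)$ with the conformal gauge of $\partial_\infty\Gamma$; your version instead verifies the quasi-symmetry of the identity map directly from the nearly-balls estimates.

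That said, as written this is a road map rather than a proof: the two load-bearing claims are asserted, not established. The construction of $d_{\mathrm{vis}}$ with $\diam W\asymp e^{-\epsilon|W|}$ and the verification of the distortion axioms is nontrivial (it occupies Chapter 3 of \cite{kmp:ph:cxci}), and, more seriously, the claim that for an \emph{arbitrary} $d\in\cM(f)$ every ball $B_d(x,r)$ is sandwiched as $W'\subset B_d(x,r)\subset W$ with bounded roundness and $|W|-|W'|=O(1)$ --- with the level comparable to $\log(1/r)$ uniformly in $x$ --- is precisely the content of the theorem being proved, not a lemma you may quote. You correctly identify this as ``the main obstacle,'' but you do not close it: one needs both the exponential upper decay of $\diam_d W_n(x)$ (from (Expansion) plus finiteness of $\UUU_0$ and iteration of $\delta_\pm$) and a uniform lower bound $\diam_d W_{n+1}(x)\ge c\,\diam_d W_n(x)$ (from the backward diameter bound applied to the finitely many nested pairs in $\UUU_1\subset\UUU_0$), and these must be assembled before the $\eta$-control follows. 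A smaller point: you fix the cover $\UUU_0$ in the definition of $\cM(f)$, whereas the statement quantifies only over metrics; one must also check that the gauge is independent of the admissible cover used.
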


We call the quasi-symmetric equivalence class $\mathcal{G}(f)$ of metrics on $X$ that contains visual metrics the {\em canonical gauge} (or the {\em conformal gauge}) of $f$. The conformal dimension $\confdim((X,\mathcal{G}(f)))$ (resp.\@ the Ahlfors-regular conformal dimension $\ARConfdim((X,\mathcal{G}(f)))$) denotes the infimal Hausdorff dimension of metrics (resp.\@ Ahlfors-regular metrics) on $X$ in the canonical gauge $\mathcal{G}(f)$.

\subsection{Antenna-like spaces}\label{sec:Antenna}

We refer the reader to \cite{Azzam_WigglySpace} for a detailed account of this subsection.

\begin{defn}[Antenna-like space]
\label{defn:antenna} Suppose $X$ is a compact connected metric space. For $0<c<1$ and an open subset $U$ of $X$, we say that \emph{$U$ has a $c$-antenna} if there is a homeomorphism $h: \Y \hookrightarrow U$ such that for all permutations $(i,j,k)$ of $(1,2,3)$, the distance between $h(e_i)$ and $h([0,e_j])\cup h([0,e_k])$ is at least $c\cdot \diam(U)$. The space $X$ is called \emph{$c$-antenna-like} if for each $r<\frac{1}{2}\diam(X)$, every ball $B(x,r)$ has a $c$-antenna.
\end{defn}

The following theorem is a combination of the results in \cite{Azzam_WigglySpace}. 

\begin{thm}[Dimension of antenna-like spaces \cite{Azzam_WigglySpace}]
\label{thm:dimant}
Suppose $X$ is a compact connected metric space. Suppose that $X$ is  $c$-antenna-like for some $c\in(0,1)$. Then the following are hold.
\begin{itemize}
    \item If a metric space $Z$ is quasi-symmetric to $X$, then $Z$ is also a $c'$-antenna-like for some $c'\in(0,1)$.
    \item There is a uniform constant $b>0$ that is independent of $c$ such that $\hdim(X)> 1+bc^2$.
    \item Hence, $\confdim(X)>1$ or $\confdim(X)=1$ but not attained.
\end{itemize} 
\end{thm}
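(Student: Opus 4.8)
The three assertions build on one another, so I would prove them in order, with the middle (dimension) bound carrying essentially all the weight. For the quasi-symmetric invariance, the plan is to use the standard principle that an $\eta$-quasi-symmetry distorts \emph{relative separation} in a controlled way: if $h:X\to Z$ is $\eta$-quasi-symmetric and $E,F\subset X$ satisfy $\operatorname{dist}(E,F)\ge c\max(\diam E,\diam F)$, then $\operatorname{dist}(h(E),h(F))\ge c'\max(\diam h(E),\diam h(F))$ for some $c'=c'(\eta,c)>0$. Given a ball $B_Z=B(z,R)$ in $Z$, I would first find a ball $B_X=B(x,\rho)$ with $h(B_X)\subset B_Z$ and $\diam h(B_X)$ comparable to $R$ (both $h$ and $h^{-1}$ carry balls to roundish sets, a routine consequence of quasi-symmetry on connected spaces), take the $c$-antenna $\Y\hookrightarrow B_X$ supplied by the hypothesis, and push it forward. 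Applying the relative-separation estimate to each arm against the union of the other two turns the $c$-antenna into a $c'$-antenna in $B_Z$; since $\eta$ is fixed, $c'$ is uniform over all balls, and $Z$ is $c'$-antenna-like.

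The heart is the dimension bound, and my plan is to factor it through \emph{uniform wiggliness}: a quantity $\beta_X(x,r)\in[0,1]$ measuring the normalized farthest deviation of $X\cap B(x,r)$ from a best-fitting geodesic. I would first show antenna-like forces $\beta_X(x,r)\gtrsim c$ at every scale and location. Indeed, if $X\cap B(x,r)$ stayed within $\delta$ of a geodesic $L$, then so would the $c$-antenna inside $B(x,r)$; ordering the three arm-endpoints by their position along $L$ and singling out the \emph{median} one, the connectedness of the union of the other two arms (which join at the center and span the two extreme endpoints) forces a point of that union to sit within $O(\delta)$ of the median endpoint. But the antenna condition demands that the median endpoint be $\ge c\diam B(x,r)\ge cr$ from the other two arms (using $\diam B(x,r)\ge r$ for connected $X$ and $r\le\diam X$), so $\delta\gtrsim cr$ and therefore $\beta_X(x,r)\gtrsim c$. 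The point is that antenna-likeness separates each endpoint from the \emph{whole} of the other two arms, not merely from the other endpoints, which is exactly what rules out flattening the tripod into a thin neighborhood of a geodesic.

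With uniform wiggliness $\beta_0\gtrsim c$ established, I would invoke the dimension estimate of \cite{Azzam_WigglySpace}, which I regard as the main obstacle since its proof is genuinely multiscale. The mechanism I would sketch: a connected piece of $X$ that deviates transversally by $\beta_0 r$ over base scale $r$ has excess length of order $(1+\beta_0^2)r$ by a Pythagorean comparison, and compounding this gain across the $\sim\log(1/r)$ dyadic scales upgrades the wiggling into a dimension surplus of order $b\beta_0^2$. Concretely, one builds a mass distribution $\mu$ on $X$ with $\mu(B(x,r))\lesssim r^{\,1+bc^2}$ and applies the mass-distribution principle to conclude $\hdim(X)\ge 1+bc^2$, with $b$ a universal constant independent of $c$; extracting this universal $b$ and controlling the compounding uniformly across scales is precisely the technical content of Azzam's argument.

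The last assertion is then a formal consequence of the first two. Any $Z\sim_{\mathrm{qs}}X$ is $c'$-antenna-like by the first assertion, so the second gives $\hdim(Z)>1+b(c')^2>1$; thus every metric in the quasi-symmetric class has Hausdorff dimension strictly above $1$, whence $\confdim(X)=\inf_Z\hdim(Z)\ge 1$. If this infimum exceeds $1$, we are done; otherwise $\confdim(X)=1$, and were it attained by some $Z$ with $\hdim(Z)=1$, the first two assertions would force $\hdim(Z)>1$, a contradiction. Hence, when $\confdim(X)=1$, it is not attained. I would stress that the antenna constant $c'$ may degrade to $0$ along a minimizing sequence $Z_n$, which is exactly what permits the infimum to equal $1$ while admitting no minimizer.
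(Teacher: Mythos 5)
The paper offers no proof of this theorem at all---it is quoted as ``a combination of the results in'' Azzam's paper on wiggly metric spaces---and your sketch correctly reconstructs the structure of that cited argument: quasi-symmetric invariance of antennas via controlled distortion of relative separation, antenna-likeness $\Rightarrow$ uniform wiggliness via the median-tip/connectedness argument, and wiggliness $\Rightarrow \hdim(X)>1+b\beta_0^2$ by the multiscale Bishop--Jones-type mass-distribution construction, with the third bullet a formal consequence. The one imprecision is that a general compact connected metric space need not contain any geodesic, so ``deviation from a best-fitting geodesic'' must be replaced by Azzam's intrinsic flatness quantity built from the excess $d(x,z)+d(z,y)-d(x,y)$ of triples; your median-endpoint argument transfers verbatim to that setting, so this is cosmetic rather than a gap.
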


\section{No exotic 1-dimensional cxc systems}\label{sec:NoExotic1DimCxc}

Cxc maps on a circle $S^1$ or a closed interval $I$ are quasi-symmetrically conjugate to the well-known dynamical systems.

\begin{thm}\label{thm:CxcOneDim}
Let $f:X\to X$ be a metric cxc map of degree-$d$.
\begin{itemize}
    \item[(i)] If $X=S^1$, then $f$ is quasi-symmetrically conjugate to the map $z\mapsto z^d$ on the unit circle $\mathbb{S}^1$ \cite[Theorem 4.1.1]{kmp:ph:cxci}.
    \item[(ii)] If $X=I$, then $f$ is quasi-symmetrically conjugate to the degree-$d$ Chebyshev or negated Chebyshev polynomial on $[-1,1]$.
\end{itemize}
\end{thm}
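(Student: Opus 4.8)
Part (i) is quoted verbatim from \cite{kmp:ph:cxci}, so the real task is (ii), and the plan is to reduce it to (i) by an \emph{unfolding} (doubling) construction, exploiting the classical fact that the Chebyshev polynomial is $z\mapsto z^d$ divided by a reflection. First I would record two topological facts about a degree-$d$ topological cxc map $f\colon I\to I$. Since fbc's are open, the image of a half-open neighborhood $[a,a+\epsilon)$ of an endpoint $a$ is open in $I$; were $f(a)$ interior, this image would be a one-sided neighborhood of an interior point, which is not open. Hence $f(\partial I)\subseteq\partial I$, and the same openness argument shows the endpoints have local degree $1$, so every branch point of $f$ is interior. Writing $I=[-1,1]$, I then form the double $\hat X=\bbS^1$ equipped with the reflection $\sigma(z)=\bar z$ and the fold $\pi=\mathrm{Re}\colon\bbS^1\to I$, an fbc of degree $2$ branched exactly over $\partial I=\{\pm1\}$, which is also the fixed-point set of $\sigma$. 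Using $f(\partial I)\subseteq\partial I$ together with path-lifting for the fbc $\pi$ (Lemma \ref{lemma:path-lifting}), I lift $f\circ\pi$ through $\pi$ to a continuous, $\sigma$-equivariant map $\hat f\colon\hat X\to\hat X$ with $\pi\circ\hat f=f\circ\pi$; the endpoint condition is exactly what forces the two sheet-choices to agree at $\{\pm1\}$, so $\hat f$ is well defined and continuous.

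Next I would check that $\hat f$ is a degree-$d$ topological cxc map. Lifting the structural cover of $f$ by $\hat\UUU_0:=\{\text{components of }\pi^{-1}(U):U\in\UUU_0\}$, the identity $\pi\circ\hat f=f\circ\pi$ and multiplicativity of the fbc degree give $2\deg\hat f=\deg(f\circ\pi)=2d$, so $\deg\hat f=d$; Expansion descends because $\pi$ is a finite-to-one map of compact metric spaces carrying $\hat\UUU_n$ into $\UUU_n$; the Degree axiom is inherited from the uniform bound for $f$ and the bound $\deg\pi=2$; and Irreducibility follows from that of $f$ together with surjectivity of $\pi$ and $\sigma$-equivariance. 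With $\hat f$ topological cxc of degree $d$, part (i) supplies a homeomorphism $h\colon\hat X\to\bbS^1$ conjugating $\hat f$ to $z\mapsto z^d$; in particular $h$ is a topological conjugacy, which is all I will use.

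Set $\sigma':=h\sigma h^{-1}$. Because $\sigma$ commutes with $\hat f$, the map $\sigma'$ is an involution commuting with $z\mapsto z^d$, and being conjugate to a reflection it is orientation-reversing with two fixed points. The key rigidity step is that $\sigma'$ must be a reflection $z\mapsto\zeta\bar z$ with $\zeta^{d-1}=1$: lifting to $\R$ via $x\mapsto e^{2\pi i x}$, an orientation-reversing lift has the form $G(x)=-x+\phi(x)$ with $\phi$ of period $1$, and commutation with $x\mapsto dx$ forces $\phi(dx)=d\,\phi(x)+n$ for some $n\in\bbZ$; boundedness of $\phi$ then forces $\phi\equiv -n/(d-1)$, i.e. $\sigma'(z)=\zeta\bar z$ with $\zeta=e^{-2\pi i n/(d-1)}$. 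Conjugating such reflections by the rotations $z\mapsto e^{2\pi i k/(d-1)}z$ that commute with $z\mapsto z^d$ partitions them into a single class when $d$ is even and two classes when $d$ is odd. The two cases are distinguished by whether the $\sigma'$-fixed points (which descend to the endpoints of the quotient interval) are fixed or interchanged by the descended dynamics: the class of $\sigma(z)=\bar z$, with fixed points $\pm1$, gives the quotient $([-1,1],T_d)$ via $\mathrm{Re}(z^d)=T_d(\mathrm{Re}\,z)$; the second class for $d$ odd, represented by $z\mapsto-\bar z$ with fixed points $\pm i$ interchanged by $z^d$, gives the negated Chebyshev model, matching the fact that $-T_d$ interchanges the endpoints while $T_d$ fixes them, and that for $d$ even $-T_d$ is already conjugate to $T_d$ via $x\mapsto-x$.

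Finally, $\sigma$-equivariance of $h$ (immediate from $\sigma'=h\sigma h^{-1}$) lets $h$ descend to a homeomorphism $\bar h\colon I=\hat X/\sigma\to\bbS^1/\sigma'\cong[-1,1]$ conjugating $f$ to $T_d$ or $-T_d$. To promote $\bar h$ to a quasi-symmetry I would avoid estimating it directly: the Chebyshev and negated Chebyshev polynomials are post-critically finite with Julia set $[-1,1]$ (their interior critical points land on the fixed endpoints), hence semi-hyperbolic and metric cxc on $[-1,1]$ with a metric in the canonical gauge, which is quasi-symmetric to the Euclidean one. Since $f$ is metric cxc and $\bar h$ is a topological conjugacy between metric cxc systems, Proposition \ref{prop:CXCConj} upgrades $\bar h$ to a quasi-symmetry, which proves (ii). I expect the main obstacle to be the verification that the lifted map $\hat f$ is genuinely topological cxc---in particular transferring Expansion and Irreducibility across the branched fold $\pi$ and controlling the behavior at the two branch points $\{\pm1\}$---since every step downstream depends on being able to invoke part (i) for $\hat f$.
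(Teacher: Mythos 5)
Your proposal is essentially correct but follows a genuinely different route from the paper. The paper's proof of (ii) stays entirely on the interval: after the same reduction via Proposition \ref{prop:CXCConj} to finding a topological conjugacy, it shows that all branch points are interior of local degree $2$ and map to $\partial I$, splits into three cases according to the action of $f$ on $\partial I$, counts and interleaves the preimages of the endpoints to produce a Markov partition with the same transition matrix as $\pm T_d$, and then invokes Milnor--Thurston kneading theory (semiconjugacy to the constant-slope piecewise-linear model with slope the growth number, upgraded to a conjugacy by transitivity) to conclude that $f$ and $\pm T_d$ are both conjugate to the same model. Your doubling construction replaces the kneading-theory input with part (i) used as a black box together with an elementary classification of orientation-reversing involutions commuting with $z^d$; it makes the structural relation between (i) and (ii) transparent, whereas the paper's argument is self-contained on the interval and produces the Markov combinatorics directly. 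The steps you flag as delicate are indeed the ones needing care: for $\hat f$ to be an honest (unbranched) covering of $\bbS^1$ you must also record that every branch point of $f$ is sent into $\partial I$ (this follows from openness exactly as your endpoint argument does, and is what makes the fold of $f$ unfold through $\pi$); for Irreducibility, $\pi(\hat f^n(W))=I$ only shows $\hat f^n(W)$ meets every fiber, so you should push forward one more time through a $\sigma$-symmetric neighborhood of a point of $\partial I$ to get all of $\bbS^1$; and since (i) is stated for metric cxc maps you should equip $\hat X$ with a visual metric via Proposition \ref{prop:visual} before invoking it. One genuine (but repairable) slip: your representative $z\mapsto-\bar z$ for the second conjugacy class is only correct for $d\equiv 3 \pmod 4$; when $d\equiv 1\pmod 4$ its fixed points $\pm i$ are \emph{fixed} by $z^d$ and $z\mapsto-\bar z$ is conjugate to $z\mapsto\bar z$ by a rotation commuting with $z^d$, so its quotient is $T_d$, not $-T_d$. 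The correct invariant is the one you also state --- whether $z^d$ fixes or swaps the two fixed points of $\sigma'$, equivalently the parity of $n$ in $\zeta=e^{-2\pi i n/(d-1)}$ --- so the classification and the conclusion stand once the representative is chosen accordingly (e.g.\ $\zeta=e^{2\pi i/(d-1)}$ for the swapping class).
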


As an immediate corollary, we can obtain a rigidity theorem, promoting semi-hyperbolic rational maps to sub-hyperbolic rational maps, i.e., every critical point in Julia sets is preperiodic.

\begin{cor}\label{cor:RigiditySemiSub}
Let $f$ be a semi-hyperbolic rational map with the Julia set $\Julia_f$ homeomorphic to $S^1$ or $I$. Then $f$ is a sub-hyperbolic rational map.
\end{cor}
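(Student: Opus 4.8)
The plan is to deduce sub-hyperbolicity from the one-dimensional rigidity in Theorem~\ref{thm:CxcOneDim} by reading off the critical dynamics of the model maps. Since $\Julia_f$ is homeomorphic to $S^1$ or $I$ it is connected, so the semi-hyperbolic map $f$ restricts to a metric cxc dynamical system $f|_{\Julia_f}\colon \Julia_f\to\Julia_f$, and in particular $f|_{\Julia_f}$ is a finite branched covering. By Theorem~\ref{thm:CxcOneDim} there is a homeomorphism $h$ conjugating $f|_{\Julia_f}$ to a model map $g$: either $g(z)=z^d$ on $\bbS^1$ (when $\Julia_f\cong S^1$), or $g$ is the degree-$d$ Chebyshev or negated Chebyshev polynomial on $[-1,1]$ (when $\Julia_f\cong I$). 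For the present corollary I only use that $h$ is a topological conjugacy, $h\circ f=g\circ h$; the full quasi-symmetry is not needed.

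The decisive step is to identify the critical points of the rational map $f$ lying in $\Julia_f$ with branch points of the finite branched covering $f|_{\Julia_f}$. Because $\Julia_f$ is completely invariant, the local degree of $f|_{\Julia_f}$ at a point $x\in\Julia_f$ equals the local degree $\deg(f;x)$ of the rational map; hence a critical point of $f$ in $\Julia_f$ is a branch point of $f|_{\Julia_f}$. A topological conjugacy preserves local degrees, so $h$ carries the branch points of $f|_{\Julia_f}$ bijectively to those of $g$, and since it sends orbits to orbits it preserves the property of being preperiodic. It therefore suffices to check that every branch point of $g$ is preperiodic. For $g(z)=z^d$ on $\bbS^1$ there is nothing to check: the restriction to the circle is an unbranched degree-$d$ covering, so $f$ has no critical point in $\Julia_f$ at all. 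For the (negated) Chebyshev maps the critical points $\cos(k\pi/d)$, $1\le k\le d-1$, map to the endpoints $\{1,-1\}$, which form a forward-invariant two-element set; every point of such a set is preperiodic, and hence so is every branch point.

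Pulling this back through $h$, every critical point of $f$ contained in $\Julia_f$ is preperiodic. For the remaining critical points, note that $f$ is semi-hyperbolic, so by Ma\~n\'e's theorem \cite{Mane_ThmFatou} every Fatou component lies in an attracting or super-attracting basin; thus each critical point in the Fatou set is periodic (a super-attracting cycle point) or attracted to an attracting periodic cycle. Combining the two cases, every critical point of $f$ is periodic, preperiodic, or attracted to an attracting periodic cycle, which is exactly the definition of sub-hyperbolicity. The main obstacle is the identification in the second step: one must verify that the local degree of the restriction $f|_{\Julia_f}$ really coincides with the holomorphic local degree of $f$ along $\Julia_f$ (equivalently, that genuine critical points are not ``invisible'' to the one-dimensional Julia set), since it is precisely this that allows the purely topological conjugacy $h$ to control the a priori holomorphic data of the critical orbits.
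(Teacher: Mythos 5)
Your argument is correct and is essentially the paper's own (unwritten) derivation: the paper treats the corollary as immediate from Theorem \ref{thm:CxcOneDim} precisely because the topological conjugacy to $z^d$ or a (negated) Chebyshev polynomial forces every critical point of $f$ lying in $\Julia_f$ to be a preperiodic branch point, while Ma\~n\'e's theorem disposes of the critical points in the Fatou set. The only nit is your parenthetical asserting that a critical point in a super-attracting basin is periodic --- it need only be attracted to the (super-)attracting cycle, which still satisfies the paper's definition of sub-hyperbolicity --- so the conclusion stands.
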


\subsection*{Chebyshev and negated Chebyshev polynomials}
For $d\ge 1$, the degree-$d$ Chebyshev polynomial $T_d$ is defined by $T_d(\cos{\theta})=\cos(d\cdot\theta)$. The Julia set of $T_d$ is a closed interval $[-1,1]$ so that $T_d:[-1,1]\to[-1,1]$ is well-defined. We always have $T_d(1)=1$ but $T_d(-1)=-1$ if $d$ is odd and $T_d(-1)=1$ if $d$ is even.
\medskip

\noindent{\em Case (i): $d=2n+1$.}

We have
\[
f^{-1}(-1)=\{-1,x_1,x_2,\dots,x_n\}
\]
and
\[
f^{-1}(1)=\{1,y_1,y_2,\dots,y_n\}
\]
so that $\deg(f;x_i)=\deg(f;y_i)=2$ for any $i,j$. By reordering the indices, we have
\[
    -1<y_1<x_1<y_2<x_2<\cdots<y_n<x_n<1.
\]
Then $f$ homeomorphically maps each connected component of $[-1,1]\setminus f^{-1}(\{-1,1\})$ to $(-1,1)$.
\medskip

\noindent{\em Case (ii): $d=2n$.}

We have
\[
f^{-1}(-1)=\{x_1,x_2,\dots,x_n\}
\]
and
\[
f^{-1}(1)=\{-1,1,y_1,y_2,\dots,y_{n-1}\}
\]
so that $\deg(f;x_i)=\deg(f;y_j)=2$ for any $i,j$. By reordering the indices, we have
\[
    -1<x_1<y_1<x_2<y_2<\cdots<y_{n-1}<x_n<1.
\]
Again $f$ homeomorphically maps each connected component of $[-1,1]\setminus f^{-1}(\{-1,1\})$ to $(-1,1)$.
\medskip

Now we consider the {\em negated Chebyshev polynomial} $T'_d:=-T_d$. If $d=2n$ then $T'_d$ and $T_d$ are conjugate each other by the negation $z\mapsto -z$. Suppose $d=2n+1$. Since $T'_d$ swaps the two end points $\{-1,1\}$, $T'_d$ is not conjugate to $T_d$. With the notations used in Case (i) above, we have $f^{-1}(-1)=\{1,y_1,y_2,\dots,y_n\}$ and $f^{-1}(1)=\{-1,x_1,x_2,\dots,x_n\}$.
\medskip

One can construct the degree-$d$ Chebyshev polynomial $T_d:[-1,1] \to [-1,1]$ by projecting the map $z\mapsto z^d$ on the unit circle $\mathbb{S}^1$ onto the diameter $[-1,1]$ on the real axis. Suppose $d=2n+1$. The map $\exp(i\theta)\mapsto \exp(i(\pi-\theta))$ is the reflection of $\mathbb{S}^1$ in the imaginary axis. It follows from
\[
    \exp(i \cdot (2n+1)\cdot (\pi-\theta))=\exp(i\cdot(\pi-(2n+1)\theta))
\]
that the projection of the map $z\mapsto z^d$ on $\mathbb{S}^1$ onto the diameter on the imaginary axis $[-i,i]$ is well-defined. This vertical projection gives rises to the negated Chebyshev polynomial dynamics.

\begin{proof}[Proof of Theorem \ref{thm:CxcOneDim}]
See \cite[Theorem 4.1.1] {kmp:ph:cxci} for the proof of Theorem \ref{thm:CxcOneDim}-(i). We prove Theorem \ref{thm:CxcOneDim}-(ii) here. Both proofs show the existence of a semi-conjugacy first and then show the semi-conjugacy is indeed a conjugacy.

By Proposition \ref{prop:CXCConj}, it suffices to show that $f$ is topologically conjugate to the dynamics of a Chebyshev or negated Chebyshev polynomial on the interval.

Let $I=[0,1]$. It follows from the topology of $I$ that for every branch point $x$ of $f$, we have $\deg(f;x)=2$, $x \in \interior(I)$, and $f(x)\in\{0,1\}$. Also, since $f$ is an open map, $f(\{0,1\})\subset \{0,1\}$. Hence every branch point is prefixed.

There are three cases: (i) $f(0)=0$ and $f(1)=1$, (ii) $f(0)=1$ and $f(1)=0$, and (iii) $f(1)=f(0)=0~\mathrm{or}~1$.

Let $x_1,x_2,\dots,x_n\in(0,1) \cap f^{-1}(0)$ and $y_1,y_2,\dots,y_m\in(0,1) \cap f^{-1}(1)$ so that $x_i<x_{i+1}$ and $y_j<y_{j+1}$ for any $i$ and $j$. Then $\deg(f;x_i)=\deg(f;y_j)=2$ for any $i,j$.
\medskip

\noindent {\it Case (i)}: Since
\[
    \deg(f;0)+\sum_{i=1}^n \deg(f;x_i)=\deg(f;1)+\sum_{i=1}^m \deg(f;y_i),
\]
we have $n=m$ and $\deg(f)=2n+1$. It follows that
\[
    0<y_1<x_1<y_2<x_2<\cdots<y_n<x_n<1.
\]
The map $f$ is then a homeomorphism onto $(0,1)$ on each connected component $J$ of $(0,1)\setminus \bigcup_{i=1}^n\{x_i,y_i\}$; if not, then $J$ has a branch point $z$ which must be mapped to either $0$ or $1$ so that $z$ is $x_i$ or $y_i$ for some $i$.

One can compute the asymptotic growth rate $s$ of the number of laps for $f^n$ (simply called the {\em growth number}), which is equal to the leading eigenvalue of the incidence matrix for the Markov partition of $f:I\to I$ by $\bigcup_{i=1}^n\{x_i,y_i\}$, and check that $s>1$. Then there is one and only one piecewise linear map $g:I \to I$ with $|g'(x)|=s$ for every $x$ where $g'(x)$ is well-defined such that there is a semi-conjugacy $h$ from $f:I\to I$ to $g:I\to I$ \cite[Theorem 7.4]{MinorThurston_Kneading}. Note that (Irreducibility) for topological cxc maps implies topological transitivity, i.e., for any pair of open sets $U, V\subset I$, we have $f^n(U) \cap V\neq \emptyset$ for some $n\ge0$. Then the semi-conjugacy $h$ is a topological conjugacy \cite[Proposition 4.6.9]{ALM_CombDynEntOndDim}.

Since $f$ and the Chebyshev polynomial $T_{2n+1}$ have the same transition matrices of Markov partitions, the growth numbers of $f$ and $T_{2n+1}$ are equal. It follows from the same argument used for $f$ that $T_{2n+1}$ is also topologically conjugate to $g$. Hence $f$ and $T_{2n+1}$ are topologically conjugate.
\medskip

\noindent {\it Case (ii)}: By a similar argument in Case (i), we can show that $f$ is topologically conjugate to the negated Chebyshev polynomial $T'_{2n+1}$ of degree $2n+1$.
\medskip

\noindent {\it Case (iii)}: Without loss of generality, suppose $f(0)=f(1)=1$. Since
\[
    \sum_{i=1}^n \deg(f;x_i)=\deg(f;0)+\deg(f;1)+\sum_{i=1}^m \deg(f;y_i),
\]
we have $n=m+1$ and $\deg(f)=2n$. It follows that
\[
0<x_1<y_1<x_2<\cdots<y_{n-1}<x_n<1.
\]
Then by a similar argument used in Case (i), one can show that $f$ is topologically conjugate to a Chebyshev polynomial $T_{2n}$ of degree $2n$.

\end{proof}

\section{Proof of Theorem \ref{theorem:main} and Theorem \ref{theorem:SullivanDict}}

In this section we prove Theorems \ref{theorem:main} and \ref{theorem:SullivanDict}.

\subsection{Proof of Theorem \ref{theorem:main}}
Suppose $f:X\to X$ is metric cxc with respect to $\UUU_0$ and $X$ is equipped with properties (1)-(4) in Proposition \ref{prop:visual}. We also use the constants $C, r_0$, and $\vep$ for used in Proposition \ref{prop:visual}. Suppose that there is an embedding $\Y \hookrightarrow X$.

By Theorem \ref{thm:dimant}, it suffices to show that $X$ is $c''$-antenna-like for some $c''>0$. We divide the proof into three steps.
\begin{itemize}
    \item [(1)] We first show in Lemma \ref{lemma:antenna_1} that there is $c>0$ so that every $U\in \UUU_0$ has a $c$-antenna.
    \item [(2)] Then we can use Lemma \ref{lemma:antenna_2} to show that there exists $c'>0$ so that every $U\in \mathbf{U}$ has a $c'$-antenna.
    \item [(3)] Then Lemma \ref{lemma:antenna_3} applies to show that $X$ is $c''$-antenna-like for some $c''>0$.
\end{itemize} 

Let us show the three lemmas.

\begin{lem}\label{lemma:antenna_1}
There exists $c>0$ so that every $U \in \UUU_0$ has a $c$-antenna.
\end{lem}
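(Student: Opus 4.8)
The plan is to produce, for each element $U \in \UUU_0$, a single embedded copy of $\Y$ sitting inside $U$ whose three endpoints stay a definite distance from the opposite arms; since $\UUU_0$ is finite, the minimum of the resulting constants yields the uniform $c$. At this stage I only need \emph{some} positive antenna constant for each $U$ — the uniformity across different levels is exactly what Lemma \ref{lemma:antenna_2} supplies later via the metric cxc distortion bounds. To seed the construction, note that $\UUU_0$ covers $X$, so the branch point $\gamma_0(o)$ of the given embedding $\gamma_0 \colon \Y \hookrightarrow X$ lies in some $U_* \in \UUU_0$. For $s>0$ small the scaled sub-tripod $\gamma_0(s\Y)$ is still an embedded copy of $\Y$ and is contained in $U_*$, so $U_*$ carries a $c_*$-antenna $h_* \colon \Y \hookrightarrow U_*$ for some $c_* > 0$ (its endpoints are at positive distance from the opposite sub-arms and $\diam U_* < \infty$).

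The main step is to transport a copy of $\Y$ into an arbitrary $U \in \UUU_0$ by pulling the seed back through the dynamics. Fix an open set $U'$ with $\overline{U'} \subset U$. By (Irreducibility) together with surjectivity of $f$ (an fbc is onto) there is $n_0$ with $f^n(U') = X$ for all $n \ge n_0$, while (Expansion) gives $\mesh(\UUU_n) \to 0$; choose $n \ge n_0$ large enough that also $\mesh(\UUU_n) < \mathrm{dist}(\overline{U'}, X \setminus U)$. Pick $x_0 \in U'$ with $f^n(x_0) = h_*(o)$, and let $\wtU \in \UUU_n$ be the component of $f^{-n}(U_*)$ containing $x_0$, so that $f^n \colon \wtU \to U_*$ is an fbc onto $U_*$. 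Since $\wtU$ is connected, meets $U'$, and has diameter at most $\mesh(\UUU_n)$, it cannot reach $X \setminus U$; hence $\wtU \subset U$. Applying Corollary \ref{cor:ylifting} to the fbc $f^n \colon X \to X$, the embedding $h_*$, and the base point $x_0$, I obtain an embedded lift $\widetilde h \colon \Y \hookrightarrow X$ with $f^n \circ \widetilde h = h_*$ and $\widetilde h(o) = x_0$. As $\widetilde h(\Y)$ is connected, contains $x_0 \in \wtU$, and maps into $U_*$, it lies in the single component $\wtU$ of $f^{-n}(U_*)$; therefore $\widetilde h(\Y) \subset \wtU \subset U$. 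Being an embedding, $\widetilde h$ keeps its three endpoints at positive distance from the opposite arms, so it is a $c_U$-antenna in $U$ for some $c_U > 0$. Setting $c = \min_{U \in \UUU_0} c_U > 0$ completes the proof.

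The only real difficulty is the containment $\widetilde h(\Y) \subset U$: path-lifting produces a lift somewhere in $X$, and a priori such a lift could wander out of $U$. I resolve this by playing (Irreducibility) against (Expansion), namely by placing the center of the lift deep inside $U$ (using $U' \Subset U$) and taking the level $n$ so large that the entire component $\wtU \in \UUU_n$ carrying the lift is thinner than the distance from $\overline{U'}$ to $X \setminus U$, which pins the connected lift inside $U$. Note that this forces $\widetilde h(\Y)$ to be small relative to $\diam U$, so the constant $c_U$ degrades as $n$ grows; this is harmless here, since finiteness of $\UUU_0$ only requires positivity, and the genuinely uniform estimate is recovered afterward in Lemma \ref{lemma:antenna_2}.
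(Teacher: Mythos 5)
Your proof is correct and follows essentially the same route as the paper: seed a copy of $\Y$ in some $U_* \in \UUU_0$ containing the branch point, use (Irreducibility) and (Expansion) to find a small level-$n$ preimage component of $U_*$ inside an arbitrary $U \in \UUU_0$, lift the tripod through the fbc $f^n$ via Corollary \ref{cor:ylifting}, and take the minimum over the finite cover. The only (harmless) difference is that you pin the preimage component inside $U$ by fixing a compactly contained $U'$ and shrinking $\mesh(\UUU_n)$ below $\mathrm{dist}(\overline{U'}, X\setminus U)$, whereas the paper achieves the same containment with the visual-metric ball estimates of Proposition \ref{prop:visual}.
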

\begin{proof}
By the assumption, we have an embedding $\Y\hookrightarrow X$. There exists $U_0 \in \UUU_0$ containing the center $o$ of $\Y$. By restricting the copy of $\Y$ to $U_0$ if necessary, we may assume that $\Y \hookrightarrow U_0$. Since $\UUU_0$ is a finite cover, it suffices to show that every $V\in \UUU_0$ has an antenna.

Let $V\in \UUU_0$. We show that there is a connected component $U_{N(V)}$ of $f^{-N(V)}(U_0)$ so that $U_{N(V)}\subset V$. It follows from Proposition \ref{prop:visual} that there is $x\in V$ so that $B(x,1/C)\subset V$. We take an $r$-ball $B(x,r)\subset V$ for $r<1/C$. By (Irreducibility) of topological cxc maps, there is $K>0$ so that $f^K(B(x,r))=X$. Then, there is a connected component $U_K$ of $f^{-K}(U_0)$ so that $U_K \cap B(x,r) \neq \emptyset$. By Proposition \ref{prop:visual}-(1) we have 
\[
\diam (U_K) < 2Ce^{-\varepsilon K}.
\]
Hence, by triangle inequality, we have
\[
U_K \subset B(x, r+2C e^{-\varepsilon K}).
\]
Note that we can make $r$ arbitrarily small. Moreover, if we decrease $r$, then $K$ should increase. Hence we can make $K$ arbitrarily large. For a sufficiently small $r$ and a sufficiently large $K$, we have $r+2C e^{-\epsilon\cdot K}< 1/C$ so that $U_K \subset V$. We write $K=K(V)$ to indicate the dependence on $V$.

Since $f^{K(V)}:U_{K(V)}\to U_0$ is a fbc, using Corollary \ref{cor:ylifting}, we can find a lift $\Y \hookrightarrow U_{K(V)}\subset V$ of $\Y\hookrightarrow U_0$. Thus any $V\in \UUU_0$ has an antenna.
\end{proof}

\begin{lem}\label{lemma:antenna_2}
There exists $c' \in (0,1)$ such that every $U' \in \mathbf{U}$ has a $c'$-antenna.
\end{lem}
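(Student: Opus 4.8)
The plan is to transport the antenna produced in Lemma~\ref{lemma:antenna_1} from level $0$ down to an arbitrary $U'\in\mathbf{U}$ by lifting, and to control the distortion using the metric cxc properties. Fix $U'\in\UUU_n$ and set $U:=f^n(U')\in\UUU_0$, so that $f^n\colon U'\to U$ is an fbc. By Lemma~\ref{lemma:antenna_1}, $U$ carries a $c$-antenna $\gamma\colon\Y\hookrightarrow U$ with center $o_U=\gamma(o)$, endpoints $a_i=\gamma(e_i)$ and arms $\alpha_i=\gamma([0,e_i])$, satisfying $\diam(\text{separation}) = d(a_i,\alpha_j\cup\alpha_k)\ge c\,\diam U$ for each permutation $(i,j,k)$. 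Choosing a preimage of $o_U$ inside $U'$ and applying Corollary~\ref{cor:ylifting} (iterated $n$ times), I obtain a lift $\widetilde\gamma\colon\Y\hookrightarrow U'$ with $f^n\circ\widetilde\gamma=\gamma$; write $\widetilde a_i,\widetilde\alpha_i$ for its endpoints and arms. The goal is a \emph{uniform} lower bound $d(\widetilde a_i,\widetilde\alpha_j\cup\widetilde\alpha_k)\ge c'\diam U'$ with $c'$ independent of $U'$ and $n$.

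The core of the argument is a downstairs construction: for each $i$ I want an element $D_i\in\mathbf{U}$ with (a) $a_i\in D_i\subset U$, (b) $D_i\cap(\alpha_j\cup\alpha_k)=\varnothing$, (c) $\diam D_i\asymp\diam U$ with uniform constants, and (d) $\roundness(D_i,a_i)=O(1)$. The key observation making this possible is that $\UUU_0$ is \emph{finite} and each antenna is a fixed compact subset of the open set $U$; hence $d(a_i,X\setminus U)\ge\beta_U>0$, and taking the minimum over the finitely many $U\in\UUU_0$ and dividing by $\max_U\diam U$ yields a uniform $\beta'>0$ with $d(a_i,X\setminus U)\ge\beta'\diam U$. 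Setting $r:=\min\{\tfrac{c}{2}\diam U,\ \beta'\diam U,\ \tfrac{r_0}{2}\}$, the ball $B(a_i,r)$ lies in $U$ and misses $\alpha_j\cup\alpha_k$, while $r$ is a uniform fraction of $\diam U$ (using $\delta_0\le\diam U\le\Delta_0$ on the finite cover). Feeding $r$ into property~(3) of Proposition~\ref{prop:visual} (Nearly balls, III) produces $D_i\in\mathbf{U}$ with $a_i\in D_i\subset B(a_i,r)\subset U$ and $\roundness(D_i,a_i)=O(1)$; properties~(1),(3) then give $\diam D_i\asymp r\asymp\diam U$. This yields (a)--(d).

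Now lift $D_i$: let $\widetilde D_i$ be the connected component of $f^{-n}(D_i)$ containing $\widetilde a_i$, an element of $\mathbf{U}$ with $f^n(\widetilde D_i)=D_i$ and $\widetilde D_i\subset U'$ (since $D_i\subset U$ forces the preimage component through $\widetilde a_i\in U'$ to stay inside $U'$). Separation transfers for free: if $\widetilde q\in\widetilde\alpha_j\cap\widetilde D_i$ then $f^n(\widetilde q)\in\alpha_j\cap D_i=\varnothing$, so $\widetilde D_i\cap(\widetilde\alpha_j\cup\widetilde\alpha_k)=\varnothing$. Applying the (Diameter distortion) property to the nested pairs $D_i\subset U$ and $\widetilde D_i\subset U'$ (the backward relative diameter bound) together with $\diam D_i/\diam U\ge\eta_1>0$ gives $\diam\widetilde D_i\ge\delta_+^{-1}(\eta_1)\,\diam U'=:\eta_2\,\diam U'$. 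Applying the (Roundness distortion) property (the backward roundness bound) with $f^n(\widetilde a_i)=a_i$ gives $\roundness(\widetilde D_i,\widetilde a_i)<\rho_-(\roundness(D_i,a_i))\le R$ for a uniform $R$. Since $\sup_{x\in\widetilde D_i}d(x,\widetilde a_i)\ge\tfrac12\diam\widetilde D_i$, bounded roundness yields a radius $s\ge\tfrac{1}{2R}\diam\widetilde D_i\ge\tfrac{\eta_2}{2R}\diam U'$ with $B(\widetilde a_i,s)\subset\widetilde D_i$, whence $d(\widetilde a_i,\widetilde\alpha_j\cup\widetilde\alpha_k)\ge s\ge c'\diam U'$ for $c':=\tfrac{\eta_2}{2R}>0$, uniform in $i$, $U'$, and $n$.

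The step I expect to be the main obstacle is condition (a)+(c) of the downstairs construction, namely producing $D_i$ that is simultaneously contained in $U$ and comparable in diameter to $U$: the raw antenna separation only supplies a ball $B(a_i,c\diam U)$ avoiding the opposite arms, which could protrude from $U$, and the diameter-distortion estimate degenerates if $\diam D_i/\diam U\to0$. Resolving this is exactly where finiteness of $\UUU_0$ (giving the uniform boundary gap $\beta'$) is essential; once $D_i$ is in hand, the transfer of disjointness is immediate and the two distortion inequalities finish the proof. Everything else is uniform bookkeeping with the fixed distortion functions $\rho_-,\delta_+$ and the visual-metric constants $C,r_0,\vep$.
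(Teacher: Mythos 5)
Your proof is correct, but it takes a genuinely different route from the paper's. The paper also begins by lifting the $c$-antenna of $U=f^n(U')$ through the fbc $f^n\colon U'\to U$ via Corollary~\ref{cor:ylifting}, but then finishes in two lines using the Homothety property (Proposition~\ref{prop:visual}-(4)): iterating $d(x,y)\ge e^{-\vep}d(f(x),f(y))$ gives that the lifted antenna's separation is at least $e^{-\vep n}c\,\diam U$, while (Nearly balls, I) gives $\diam U'\le 2Ce^{-\vep n}\le C^2e^{-\vep n}\diam U$, so the lift is a $(c/C^2)$-antenna. You instead avoid the pointwise homothety estimate entirely: you manufacture buffer sets $D_i\in\mathbf{U}$ around the antenna tips downstairs (using finiteness of $\UUU_0$ to get a uniform boundary gap, and (Nearly balls, III) to realize the buffers as round elements of $\mathbf{U}$), lift them, and control them with the roundness- and diameter-distortion axioms of metric cxc systems. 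Your argument is longer and has more moving parts (in particular the uniform comparability $\diam D_i\asymp\diam U$ needs the care you give it), but it leans only on the combinatorial distortion data $\rho_-,\delta_+$ plus the nearly-ball properties, rather than on the exact exponential scaling of the visual metric; the paper's argument is shorter precisely because the proof of Theorem~\ref{theorem:main} has already fixed a visual metric, for which $f$ expands distances by a factor of at most $e^{\vep}$ per iterate and diameters at level $n$ are pinned to $e^{-\vep n}$. Both yield a $c'$ independent of $U'$ and $n$, which is the crux of the lemma.
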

\begin{proof}
It suffices to show that for any $U\in \UUU_0$ and $n>0$, every $U'\in \UUU_n$ with $f^n(U')=U$ has a $c'$-antenna.

Suppose $h:\Y \hookrightarrow U$ is an embedding such that for all permutations $(i,j,k)$ of $(1,2,3)$, the distance between $h(e_i)$ and $h([0,e_j])\cup h([0,e_k])$ is at least $c\cdot \diam(U)$. 
Suppose $U'\in \UUU_n$ is a connected component of $f^{-n}(U)$. Let $y = h(o)$ where $o$ is the branch point $\Y$. Let $x \in U'$ be a point such that $f^n(x) = y$. Since $f^n:U' \to U$ is an fbc, it follows from Corollary \ref{cor:ylifting} that there is an embedding $\Tilde{h}:\Y \to U'$ which is a lift of $h:\Y \hookrightarrow U$, i.e., $f^n \circ \Tilde{h} = h$ and $\Tilde{h}(o) = x$. Proposition \ref{prop:visual}-(4) implies that for any $t_1, t_2 \in \Y$, 
\[ d(\Tilde{h}(t_1), \Tilde{h}(t_2)) \geq e^{-\vep n} d(h(t_1), h(t_2)). \]
Thus
\begin{align*} 
&\hspace{12pt} d(\Tilde{h}(e_1), \Tilde{h}([0,e_2]\cup[0,e_3]))\\
&= \inf\{ d(\Tilde{h}(e_1), \Tilde{h}(t)): t \in [0,e_2]\cup[0,e_3]\} \\
&\geq e^{-\vep n}  \inf\{ d(h(e_1), h(t)):t \in [0,e_2]\cup[0,e_3]\} \\
&= e^{-\vep n} d(h(e_1), h([0,e_2]\cup[0,e_3])) \\
&\geq e^{-\vep n} c\cdot \diam U. 
\end{align*}
Likewise, we have
\begin{align*} 
d(\Tilde{h}(e_2), \Tilde{h}([0,e_3]\cup[0,e_1])), d(\Tilde{h}(e_3), \Tilde{h}([0,e_1]\cup[0,e_2]))
\geq e^{-\vep n} c \cdot \diam U. 
\end{align*}
It follows from Proposition \ref{prop:visual}-(1) that 
\[
    \diam U' \leq 2Ce^{-\vep n} \leq C^2e^{-\vep n}\cdot\diam U.
\]
We conclude that $U'$ has a $c'$-antenna for $c' = c/{C^2}$. 
\end{proof}

\begin{lem}\label{lemma:antenna_3}
If every $U\in \mathbf{U}$ has a $c$-antenna for some $c>0$, then $X$ is $c'$-antenna-like for some $c' \in (0,1)$. 
\end{lem}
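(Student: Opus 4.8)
The plan is to deduce the ball condition in the definition of antenna-like from the hypothesis on elements of $\mathbf{U}$ by using the ``nearly balls'' sandwiching in Proposition \ref{prop:visual}. Fix the constants $C>1$, $\epsilon>0$, and $r_0>0$ of Proposition \ref{prop:visual}, and let $M\ge 0$ and $R\ge 1$ be uniform constants realizing the bounds $|W|-|W'|=O(1)$ and $\max\{\roundness(W,x),\roundness(W',x)\}=O(1)$ in part (3). Given $x\in X$ and $r\in(0,r_0)$, part (3) produces $W',W\in\mathbf{U}$ with $x\in W'\subset B(x,r)\subset W$ and $|W'|-|W|\le M$. The copy of $\Y$ provided by the hypothesis applied to $W'$ then already lies inside $B(x,r)$; the only thing to verify is that its antenna constant, measured relative to $\diam(W')$, converts into a definite fraction of $\diam B(x,r)$. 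This reduces the whole statement to the \emph{size comparison} $\diam(W')\ge \kappa\, \diam B(x,r)$ for a constant $\kappa>0$ independent of $x$ and $r$.

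For the comparison I would argue as follows. Since $X$ is connected and $r<\tfrac12\diam(X)\le R_x:=\sup_{y}d(x,y)$, the continuous function $d(x,\cdot)$ takes all values in $[0,r)$, so $B(x,r)$ contains points at distance arbitrarily close to $r$ from $x$; hence $\diam B(x,r)\ge r$ and, as $B(x,r)\subset W$, also $\diam W\ge r$. Proposition \ref{prop:visual}(1) gives $\diam W\le 2Ce^{-\epsilon|W|}$ and $\diam W'\ge \tfrac1C e^{-\epsilon|W'|}$. Combining these with the level gap $|W'|-|W|\le M$ yields
\[
\diam W' \;\ge\; \tfrac1C e^{-\epsilon|W'|} \;\ge\; \tfrac1C e^{-\epsilon M} e^{-\epsilon|W|} \;\ge\; \frac{e^{-\epsilon M}}{2C^2}\,\diam W \;\ge\; \frac{e^{-\epsilon M}}{2C^2}\, r .
\]
Since $\diam B(x,r)\le 2r$, this is the desired inequality with $\kappa=e^{-\epsilon M}/(4C^2)$. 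Consequently the $c$-antenna $h:\Y\hookrightarrow W'\subset B(x,r)$ satisfies, for every permutation $(i,j,k)$ of $(1,2,3)$,
\[
d\big(h(e_i),\,h([0,e_j])\cup h([0,e_k])\big)\;\ge\; c\,\diam W' \;\ge\; c\kappa\,\diam B(x,r),
\]
so $B(x,r)$ has a $(c\kappa)$-antenna.

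It remains to treat the large radii $r\in[r_0,\tfrac12\diam X)$ not covered by part (3). Here I would apply the estimate above to the smaller ball $B(x,r_0/2)\subset B(x,r)$, obtaining an embedded $\Y$ inside $B(x,r)$ whose three arms are separated by at least $c\kappa\,(r_0/2)$ in absolute terms; dividing by $\diam B(x,r)\le\diam X$ converts this into an antenna constant $c\kappa r_0/(2\diam X)$. Taking $c'$ to be the minimum of the two constants (and noting $c'\in(0,1)$ automatically, since the separations cannot exceed $\diam B(x,r)$) shows that every ball of radius $<\tfrac12\diam X$ has a $c'$-antenna, i.e.\ $X$ is $c'$-antenna-like. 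The main obstacle is the size comparison of the middle paragraph: one must convert the purely combinatorial control $|W'|-|W|=O(1)$ of part (3) into metric comparability of $\diam W'$, $\diam W$, and $r$, and the lower bound $\diam W\ge r$ is exactly where connectedness of $X$ (via the intermediate-value property of $d(x,\cdot)$) enters in an essential way.
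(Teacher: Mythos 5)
Your proof is correct and follows essentially the same route as the paper's: sandwich $W'\subset B(x,r)\subset W$ via Proposition \ref{prop:visual}(3), convert the bounded level gap into the diameter comparison $\diam W'\gtrsim \diam B(x,r)$ via Proposition \ref{prop:visual}(1), and reduce large radii $r\ge r_0$ to a fixed small radius at the cost of a factor comparable to $r_0/\diam X$. If anything, your use of connectedness to justify $\diam B(x,r)\ge r$ (and hence $\diam W\ge r$) is a point the paper's write-up glosses over, so no changes are needed.
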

\begin{proof}
Suppose $B(x,r)$ be a ball of radius $r$ Let $r_0$ is as defined in Proposition \ref{prop:visual}.

If $r<r_0$, then by Proposition \ref{prop:visual}-(3) there exist $W, W'\in \mathbf{U}$ such that $|W|-|W'| = O(1)$ and 
\[ W' \subset B(x,r) \subset W. \]
Since $|W| - |W'| = O(1)$, Proposition \ref{prop:visual}-(1) yields
\[ \diam W' \geq C^{-2} e^{-\vep (|W|'-|W|)} \diam W \geq 2rC^{-2} e^{-\vep\cdot O(1)}. \]
Let $c' = c \cdot C^{-2}\cdot  e^{-\vep\cdot O(1)}$. By the assumption in the statement of the lemma, there exists an embedding $h:\Y \to W'\subset B(x,r)$ such that for all permutations $(i,j,k)$ of $(1,2,3)$, the distance between $h(e_i)$ and $h([0,e_j])\cup h([0,e_k])$ is at least $c\cdot \diam(W') \geq c'\cdot 2r=c'\cdot \diam B(x,r)$, i.e., $B(x,r)$ has a $c'$-antenna. If $r_0> \frac{1}{2}\diam X$, then this completes the proof of the lemma.

Suppose now $r_0<r < \frac{1}{2}\diam X$. Then $\frac{2rr_0}{\diam X} < r_0$ so that there exists $r' \in \R$ such that $\frac{2rr_0}{\diam X} < r' < r_0$. By the previous paragraph, there exists an embedding $h:\Y \to B(x,r')$ such that for all permutations $(i,j,k)$ of $(1,2,3)$, the distance between $h(e_i)$ and $h([0,e_j])\cup h([0,e_k])$ is at least $c'\cdot 2r'> c' \cdot (2r_0/\diam X) \cdot 2r$. Replacing $c'$ by $c' \cdot (2r_0/\diam X)$, we can make $X$ $c'$-antenna-like.
\end{proof}

\subsection{Proof of Theorem \ref{theorem:SullivanDict}}

We prove the following proposition first and then complete the proof of Theorem \ref{theorem:SullivanDict}.

\begin{prop}\label{prop:NonSmoothExistY}
Suppose $\Julia_f$ is the Julia set of a semi-hyperbolic rational map $f$. If $\Julia_f$ is connected, and $\Julia_f$ is homeomorphic to neither the circle $S^1$ or a closed interval $[0,1]$, then $\Y \hookrightarrow J$. 
\end{prop}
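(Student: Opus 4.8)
The plan is to prove the contrapositive: assuming $\Julia_f$ is a nondegenerate connected Julia set containing no embedded copy of $\Y$, I will show it must be homeomorphic to $S^1$ or $[0,1]$. The essential input is that $\Julia_f$ is a \emph{Peano continuum}: for a semi-hyperbolic rational map with connected Julia set the Julia set is locally connected, so $\Julia_f$ is a compact, connected, locally connected metric space, hence arcwise connected and locally arcwise connected. This reduces the proposition to a purely topological statement, namely that an atriodic Peano continuum (one containing no embedded $\Y$) is an arc or a simple closed curve.

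First I would dispose of the case in which $\Julia_f$ contains a simple closed curve $C$. If $\Julia_f \neq C$, pick $q \in \Julia_f \setminus C$ and, using arcwise connectedness, an arc $\alpha$ from $q$ to $C$; letting $z$ be the first point of $\alpha$ (traversing from $q$) that lies on $C$, the subarc $\beta$ from $q$ to $z$ is nondegenerate and meets $C$ only at $z$. Then two short subarcs of $C$ issuing from $z$ in its two directions, together with a short initial subarc of $\beta$ at $z$, form three arcs meeting pairwise only at $z$, i.e.\ an embedded $\Y$ centered at $z$, contradicting our assumption. Hence $\Julia_f = C$, which is homeomorphic to $S^1$.

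The remaining case is that $\Julia_f$ contains no simple closed curve, i.e.\ it is a dendrite (a uniquely arcwise connected Peano continuum). Here I would invoke the classical structure theory of Peano continua: a point of \emph{order} $\geq 3$ (one admitting arbitrarily small neighborhoods whose boundaries contain at least three points, equivalently a branch point) is the center of three arcs that are otherwise disjoint, hence the center of an embedded $\Y$. Since $\Julia_f$ contains no $\Y$, every point has order at most $2$; a nondegenerate dendrite with no branch point is an arc, so $\Julia_f$ is homeomorphic to $[0,1]$.

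The main obstacle, and the step I would treat most carefully, is the passage from ``no triod'' to the classification, specifically the equivalence between the existence of a triod and the existence of a point of order $\geq 3$, together with the conclusion that order $\leq 2$ everywhere forces an arc or a circle. Realizing three genuinely disjoint arms at a branch point from the abstract order condition requires local arcwise connectedness and a first-hitting argument of the type used in the circle case above; alternatively one can organize the dendrite case around a maximal-arc construction, where the technical point is extracting a maximal arc by a Hausdorff-metric compactness argument and then checking that any point off it forces an interior branch and hence a $\Y$. I would also make explicit at the outset the local connectedness of $\Julia_f$, citing the standard result for semi-hyperbolic rational maps (or deriving it from the cxc structure), since the entire topological reduction rests on it.
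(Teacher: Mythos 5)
Your proposal is correct and follows the same overall strategy as the paper: establish local connectedness of $\Julia_f$ (the paper cites the result that Julia sets of semi-hyperbolic maps are John domains' boundaries, hence locally connected), deduce arcwise connectedness, argue the contrapositive, and split into the case where $\Julia_f$ contains a simple closed curve and the dendrite case. Your circle case is essentially identical to the paper's: an arc from an omitted point to the curve, truncated at its first hitting point, together with a subarc of the curve, produces an embedded $\Y$. The only real divergence is in how the dendrite case is closed. The paper picks a point $x$, observes that $\Julia_f\setminus\{x\}$ having three or more components yields a $\Y$ centered at $x$, and then, for each of the at most two components, uses unique arcwise connectedness and hereditary unicoherence to show that the arcs emanating from $x$ are totally ordered by inclusion, so each component closure is an arc with endpoint $x$. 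You instead invoke the Menger--Urysohn order theory of Peano continua: a point of order at least $3$ is the vertex of a triod, so the absence of $\Y$ forces order at most $2$ everywhere, and a nondegenerate dendrite with no branch points is an arc. Both routes are standard continuum theory and both are valid; the paper's is somewhat more self-contained and elementary, while yours outsources the key step to a classical theorem (the equivalence of order $\geq 3$ and the existence of a triod), which you correctly flag as the step requiring the most care and which does need a citation or a first-hitting construction of the three arms to be complete.
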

\begin{proof}
Let $f$ be a semi-hyperbolic rational map with a connected Julia set $\Julia_f$. Then $\Julia_f$ is locally connected \cite[Theorem 2]{JuliaJohn}. A connected and locally connected compact subset of $\R^2$ is arcwise connected \cite[Lemmas 17.17 and 17.18]{Milnor}, i.e., for every $x,y \in \Julia_f$, there exists a homeomorphic embedding, called an arc, $\gamma:[0,1] \to \Julia_f$ such that $\gamma(0) = x$ and $\gamma(1) = y$.

Let us assume that $\Julia_f$ does not contain a homeomorphic copy of $\Y$ and show that $\Julia_f$ is either $S^1$ or $[0,1]$. 

If $\Julia_f$ contains a simple closed cure $C$ that omits a point $x \in \Julia_f$, then $x$ is joined to some point in $C$ by an arc $\gamma:[0,1]\to \Julia_f$ so that $\gamma(0)=x$ and $\gamma(1)\in C$. Let $t:=\min\{t\in[0,1] : \gamma(t)\in C$. Let $I$ be a subarc of $C$ containing $\gamma(t)$ in its interior. Then $I\cup \gamma([0,t])$ is the image of an embedding $\Y\hookrightarrow \Julia_f$, which contradicts to the assumption that $\Julia_f$ does not contains $\Y$. Hence $\Julia_f$ is homeomorphic to $S^1$.

If $\Julia_f$ does not contain a simple closed loop, then $\Julia_f$ is a locally connected continuum that contains no simple closed curves, i.e., a dendrite in the sense of \cite{dendriteEquivalence}. Pick $x \in \Julia_f$. Then $x$ is either a cut point of $\Julia_f$ or an end point of $\Julia_f$ \cite[Theorem 1.1 (3)]{dendriteEquivalence}.

If $\Julia_f \backslash \{x\}$ has more than 2 connected components, then one can find a homeomorphic copy of $\Y$ centered at $x$, contradicting our assumption. If $\Julia_f \backslash\{x\}$ has two connected components, we denote by $L_1$ and $L_2$ the closures of the connected components. If $x$ is an end point of $\Julia_f$, we let $L_1 = \Julia_f$ and $L_2 = \emptyset$. 

Every point $y \in L_1$ with $y\neq x$ can be joined to $x$ by an unique arc \cite[Theorem 1.2 (20)]{dendriteEquivalence}. Let $y_1$ and $y_2$ be two distinct points in $L_1 \backslash \{x\}$. For $i = 1,2$, let $\gamma_i$ be the unique arc joining $x$ and $y_i$. Since $ x \in \gamma_1\cap \gamma_2$, $\gamma_1\cap \gamma_2$ is non-empty. By hereditary unicoherency of $\Julia_f$(\cite[Theorem 1.1 (18)]{dendriteEquivalence}), $\gamma_1 \cap \gamma_2$ is connected. Then either $\gamma_1 \cap \gamma_2 = \gamma_1$ or $\gamma_1 \cap \gamma_2 = \gamma_2$, for otherwise $\gamma_1 \cup \gamma_2$ would contain a homeomorphic copy of $\Y$. This proves that $L_1$ is homeomorphic to $[0,1]$ if they are non-empty, with $x$ as one end point. The same proof shows that $L_2$ is either empty or homeomorphic to $[0,1]$ with $x$ as one end point, and hence $\Julia_f$ is homeomorphic to $[0,1]$. 
\end{proof}

\begin{proof}[Proof of Theorem \ref{theorem:SullivanDict}]

Suppose $f$ is a degree-$d$ semi-hyperbolic rational map with connected Julia set $\Julia_f$ such that $\confdim(\Julia_f)=1$. By Theorem \ref{theorem:main} and Proposition \ref{prop:NonSmoothExistY}, the conformal dimension is attained if and only if $\Julia_f$ is homeomorphic to $S^1$ or $[0,1]$. If $\Julia_f$ is homeomorphic to $S^1$ or $[0,1]$, then, by Theorem \ref{thm:CxcOneDim} and Corollary \ref{cor:RigiditySemiSub}, $f$ is sub-hyperbolic whose dynamics restricted to $\Julia_f$ is quasi-symmetrically conjugate to $z^d$, the degree-$d$ Chebyshev polynomial, or the degree-$d$ negated Chebyshev polynomial. We can also have $1/z^d$ if the quasi-symmetric conjugacy reverses the orientation of on the circle. Then the unique post-critically rational map corresponding to the sub-hyperbolic rational map $f$, by \cite{McM_PCFCenter}, is one of these rational maps up to conjugation by M\"{o}bius transformations. 
\end{proof}

\bibliographystyle{alpha}
\bibliography{refs}
\end{document}